\theoremstyle{plain}
\def\Q{{\mathbb Q}}
\newtheorem{lem}{Lemma}[section]
\newtheorem{prop}[lem]{Proposition}
\newtheorem{claim*}{Claim}
\newtheorem{thm}[lem]{Theorem}
\theoremstyle{rem}
\newtheorem{rem}[lem]{Remark}
\theoremstyle{remark}
\title[Growth of torsion]{NON-CYCLIC TORSION OF ELLIPTIC CURVES OVER IMAGINARY QUADRATIC FIELDS OF CLASS NUMBER 1 }
\author{IRMAK BAL\c{C}IK}  
\address{Department of Mathematics, Northwestern University, 2033 Sheridan Road,
	Evanston, IL 60208, USA}
\email{irmak.balcik@northwestern.edu}
\begin{document}

\begin{abstract}
	Let $K$ be a non-cyclotomic imaginary quadratic field with class number 1 and $E/K$ an elliptic curve with $E(K)[2]\simeq \mathbb{Z}/2\mathbb{Z} \oplus \mathbb{Z}/2\mathbb{Z}.$ In this paper, we determine the torsion groups that can arise as $E(L)_{\text{tor}}$ where $L$ is any quadratic extension of $K.$
\end{abstract}

\maketitle

\section{Introduction}
Let $E$ be an elliptic curve over a number field $K$. The celebrated theorem of Mordell-Weil asserts that the set of $K$-rational points $E(K)$ is a finitely generated abelian group. Let $E(K)_{\text{tor}}$ denote the torsion subgroup of $E(K)$. Mazur \cite{Maz78} first completed the classification of torsion subgroups that can be realized as $E(\mathbb{Q})_{\text{tor}}$ where $E$ is an elliptic curve over $\mathbb{Q}.$ These groups are:
\begin{equation} \label{eq:mazurgroups}
\begin{array}{ll}
\mathbb{Z}/n\mathbb{Z} & 1\leq n \leq 12,\  n\neq 11 \\
\mathbb{Z}/2\mathbb{Z} \oplus \mathbb{Z}/{2n}\mathbb{Z} & 1\leq n\leq 4.
\end{array}
\end{equation}

Subsequently, Kamienny \cite{Kam92} and Kenku-Momose \cite{KM88} determined the complete list of torsion subgroups arising as $E(K)_{\text{tor}}$ where $E$ is an elliptic curve over any quadratic number field $K$. These groups are:
\begin{equation} \label{eq:kamiennygroups}
\begin{array}{ll}
\mathbb{Z}/n\mathbb{Z} &  1\leq n\leq 18,\ n\neq 17  \\  \mathbb{Z}/2\mathbb{Z} \oplus \mathbb{Z}/{2n}\mathbb{Z} & 1\leq n\leq 6  \\ 
\mathbb{Z}/3\mathbb{Z} \oplus \mathbb{Z}/{3n}\mathbb{Z} & 1\leq n\leq 2 \\ 
\mathbb{Z}/4\mathbb{Z} \oplus \mathbb{Z}/4\mathbb{Z}. & 
\end{array}
\end{equation}

Over cubic number fields, the full description of torsion subgroups has been settled only recently \cite{DEVMB21}. Over quartic number fields, while we still do not have a complete classification, analogous to \eqref{eq:mazurgroups}, we know due to \cite{JKP06} that which torsion subgroups occur infinitely often up to isomorphism of elliptic curves. Moreover, $17$ is the largest prime dividing the order of a point over a quartic number field by \cite{DKSS17}. As a first step toward this direction, we ask ``given an elliptic curve $E$ over any quadratic number field $K$, if $E(K)_{\text{tor}}$ has full 2-torsion, then how does $E(L)_{\text{tor}}$ relate with $E(K)_{\text{tor}}$ where $[L:K]=2$?" If $E(K)_{\text{tor}} \subsetneq E(L)_{\text{tor}},$ we say that torsion grows. In particular, for a given elliptic curve $E$ over $K$,  we know $E(L)_{\text{tor}} = E(K)_{\text{tor}}$ for all but finitely many quadratic extensions $L/K$ since there are only finitely many torsion groups that can arise over a quartic number field. This leads us to a more fundamental question: whether there is a good bound for the growth? Kwon \cite{Kwo97} showed that for an elliptic curve $E/\mathbb{Q},$ the growth of $E(\mathbb{Q})_{\text{tor}}$ upon quadratic base change is controlled by the torsion structure realizing on a quadratic twist of the original curve. His result can be generalized to number fields which allows us to bound $E(L)_{\text{tor}}/E(K)_{\text{tor}}$ (See Proposition \ref{mainprop1} below). In this note, we answer the main objective above for the set $\mathcal{S}$ of all non-cyclotomic imaginary quadratic fields with class number 1, that is
$  \mathcal{S}=\{ \mathbb{Q}(\sqrt{-2}), \mathbb{Q}(\sqrt{-7}), \mathbb{Q}(\sqrt{-11}),\mathbb{Q}(\sqrt{-19}),\mathbb{Q}(\sqrt{-43}),\mathbb{Q}(\sqrt{-67}), \mathbb{Q}(\sqrt{-163})\}.$

We choose to work over the base fields in $\mathcal{S}$ as these are the remaining imaginary quadratic fields with class number 1 for which the classification of torsion subgroups is known due to \cite{SS18}. The method used in their paper is outlined by Kamienny and Najman \cite{KN12}. In case $E(K)[2]\simeq \mathbb{Z}/2\mathbb{Z} \oplus \mathbb{Z}/2\mathbb{Z},$ we generalize both Kwon's work in which $K=\mathbb{Q},$ and Newman's work \cite{New17} in which $K \in \{\mathbb{Q}(\sqrt{-1}),\mathbb{Q}(\sqrt{-3})\}.$ 
This note also completes the author's earlier work \cite{Bal21} in which for any $K \in \mathcal{S}$ and an elliptic curve $E/K$ with trivial $2$-torsion, the growth of $E(K)_{\text{tor}}$ is classified upon quadratic base change.

Our methods do not explicitly rely upon the facts that $\mathcal{O}_K$ is a UFD and has a finite unit group whereas Kwon's techniques do. Instead, we used new techniques. First, the action of Gal$(L/K)$ on $E(L)_{\text{tor}}$ when $L/K$ is Galois. Second, we  made Newman's method more general and efficient, specifically by avoiding the use of Gr{\"o}bner basis (see Lemma \ref{lem3}). Altogether this finishes the classification of quadratic growth of non-cyclic torsion for the set of all imaginary quadratic number fields with class number $1$.

In order to describe our main result, we introduce the following notation. As in \cite{GJT14}, let
$$ \Phi_K(d):= \{G : \exists \ E/K : E(L)_{\text{tor}} \simeq G \  \text{for some}\ [L:K]=d\} $$
for  $d \in \mathbb{Z}^+.$ For a fixed $G \in \Phi_K(1),$  we write
\begin{align*}
 \Phi_{K}(d,G):= \{ H : \exists \ E/K :  E(K)_{\text{tor}} \simeq G, E(L)_{\text{tor}} \simeq H\ \text{for some}\ [L:K]=d \}.
\end{align*}

We are ready to state our main result for which one can find explicit examples at the end of introduction.

\begin{thm}\label{mainthm2}   
	Let $K=\mathbb{Q}(\sqrt{D})$ be fixed  in $\mathcal{S}$ with $D=-2,-7,-11,-19,-43,-67,-163$ and let $G \in \Phi_K(1)$ such that $\mathbb{Z}/2\mathbb{Z} \oplus \mathbb{Z}/2\mathbb{Z} \subseteq G.$

	\begin{enumerate}
		\item If $G \simeq \mathbb{Z}/2\mathbb{Z} \oplus \mathbb{Z}/{12}\mathbb{Z}$ then $\Phi_K(2,G) =  \{\mathbb{Z}/2\mathbb{Z} \oplus \mathbb{Z}/{12}\mathbb{Z}\}.$ 
		\item If $G \simeq \mathbb{Z}/2\mathbb{Z} \oplus \mathbb{Z}/{10}\mathbb{Z},$ then
		 $\Phi_K(2,G) = \{\mathbb{Z}/2\mathbb{Z} \oplus \mathbb{Z}/{10}\mathbb{Z}\}.$ 
		\item If $G \simeq \mathbb{Z}/2\mathbb{Z} \oplus \mathbb{Z}/8\mathbb{Z}$ and
		\begin{enumerate}[i.]
			\item $D\neq -7,$ then  $\Phi_K(2,G) =\{\mathbb{Z}/2\mathbb{Z} \oplus \mathbb{Z}/{8}\mathbb{Z}\}.$
			
             \item $D=-7,$ then $\Phi_K(2,G) = \{\mathbb{Z}/2\mathbb{Z} \oplus \mathbb{Z}/{8}\mathbb{Z},  \mathbb{Z}/2\mathbb{Z} \oplus \mathbb{Z}/{16}\mathbb{Z},  \mathbb{Z}/4\mathbb{Z} \oplus \mathbb{Z}/8\mathbb{Z} \}.$
		\end{enumerate}
		\item	If $G \simeq \mathbb{Z}/2\mathbb{Z} \oplus \mathbb{Z}/6\mathbb{Z}$ and
		\begin{enumerate}[i.]
			\item $D=-2,$ then $\Phi_K(2,G) =\{\mathbb{Z}/2\mathbb{Z} \oplus \mathbb{Z}/{6}\mathbb{Z}, \mathbb{Z}/2\mathbb{Z} \oplus \mathbb{Z}/{12}\mathbb{Z}\}.$
			\item $D\neq -2,$ then 
			$\Phi_K(2,G) =\{\mathbb{Z}/2\mathbb{Z} \oplus \mathbb{Z}/{6}\mathbb{Z}, \mathbb{Z}/2\mathbb{Z} \oplus \mathbb{Z}/{12}\mathbb{Z},  \mathbb{Z}/6\mathbb{Z} \oplus \mathbb{Z}/{6}\mathbb{Z}\}.$
			\end{enumerate}
		\item   If $G \simeq \mathbb{Z}/2\mathbb{Z} \oplus \mathbb{Z}/4\mathbb{Z}$ and 
			\begin{enumerate}[i.] 
				\item $D=-2,-11,$ then $\Phi_K(2,G)=  \{\mathbb{Z}/2\mathbb{Z} \oplus \mathbb{Z}/4\mathbb{Z},  \mathbb{Z}/2\mathbb{Z} \oplus \mathbb{Z}/{8}\mathbb{Z}, \mathbb{Z}/4\mathbb{Z} \oplus \mathbb{Z}/4\mathbb{Z}\}.$ 
				\item $D=-7,$ then $\Phi_K(2,G)=\{\mathbb{Z}/2m\mathbb{Z} \oplus \mathbb{Z}/4n\mathbb{Z}: m=1,2\ \text{and}\ n=1,2\}.$
				\item $D= -19,-43,-67,-163,$ then  $\Phi_{K}(2,G)=  \{\mathbb{Z}/2\mathbb{Z} \oplus \mathbb{Z}/4n\mathbb{Z} : n=1,2,3\} \cup \{\mathbb{Z}/4\mathbb{Z} \oplus \mathbb{Z}/4\mathbb{Z}\}.$
			\end{enumerate}
			\item If $G \simeq \mathbb{Z}/2\mathbb{Z} \oplus \mathbb{Z}/2\mathbb{Z},$ then $\Phi_K(2,G) \subseteq \{\mathbb{Z}/2\mathbb{Z} \oplus \mathbb{Z}/{2n}\mathbb{Z} : n=1,2,3,4,5,6,8\} \cup \{\mathbb{Z}/4\mathbb{Z} \oplus \mathbb{Z}/4\mathbb{Z}\}.$
	\end{enumerate}	
\end{thm}

\subsection*{Outline of the paper} In section 2, we present a number of known results that narrow our focus. For example, an elliptic curve with full $2$-torsion over a number field $K$ admits a Weierstrass model of the form $E(\alpha,\beta): y^2 = x(x + \alpha)(x + \beta)$ where $\alpha,\beta \in \mathcal{O}_K.$ The existence of a point of order $3$, $4$ or $8$ is equivalent to finding solutions to certain equations involving $\alpha$ and $\beta$ (see Theorem \ref{newman}).

In section 3, we provide Theorem \ref{isogeny} which will be useful for section 4. Specifically, for a given $N,$ we classify certain $K$-rational cyclic $N$-isogenies for any $K$ in $\mathcal{S}$. In general,  the set of $K$-rational points of the modular curve $X_0(N)$ parametrizes the isomorphism classes $[(E,C)]$ of pairs $(E,C)$ where $E$ is an elliptic curve defined over $K$ and $C \subseteq E(\overline{K})$ is a Gal$(\overline{K}/K)$-invariant cyclic subgroup of order $N.$ If $X_0(N)$ has genus at least 2, then there are only finitely many $K$-rational points by a renowned result of Faltings. The $N$-values of interest are $40$ and $48.$ In both cases, $X_0(N)$ is a hyperelliptic curve with genus 3. The main trick for studying quadratic points on these curves is to instead study the rational points on its symmetric square $X_0(N)^{(2)}.$ This boils down to finding the rational points of its Jacobian $J_0(N).$ In light of the fact that $J_0(N)(\mathbb{Q})$ has rank 0 for the aforementioned values of $N,$ Bruin and Najman \cite{BN15} classified all exceptional quadratic points of $X_0(N).$ We complement their work and show that $X_0(N)(K)$ has no non-exceptional points for fields $K$ from $\mathcal{S}.$

In section 4, we prove the key ingredient Theorem \ref{mainthm1} which is the description of $T_K(G)$: the set of all possible torsion subgroups (up to isomorphism) for $E^d(K)_{\text{tor}}$ as $d$ varies over $K^*/(K^*)^2$ where $E/K$ is any elliptic curve with $E(K)_{\text{tor}} \simeq G.$ We study the torsion structures on quadratic twists because if $L=K(\sqrt{d})$ and $E/K$ is an elliptic curve then $E^d(K)_{\text{tor}}$ sets a bound for $E(L)_{\text{tor}}$ (see Proposition \ref{mainprop1}).

In section 5, we prove the main result Theorem \ref{mainthm2} of this paper. With the description of $T_K(G)$ at hand,  we can have a finite list of possible torsion groups for $E(L)_{\text{tor}}.$ Moreover, the growth over $L$ occurs if and only if there exists a point $P \in E(K)_{\text{tor}}$ with $P \in 2E(L)_{\text{tor}} \backslash\ 2E(K)_{\text{tor}}.$ The criterion for $2$-divisibility is given in Lemma \ref{lem1}. Applying this criterion yields systems of Diophantine equations. The difficulty arises with torsion points of high order such as $10$ or $12$. In order to work around this issue, we shift our perspective and benefit from using the action of Gal$(L/K)$ on $E(L)_{\text{tor}}.$ 

\subsection*{Notation} Throughout, we will use the following notations:
\begin{align*}
	\mathcal{S}=&\{\mathbb{Q}(\sqrt{D}) : D= -2,-7,-11,-19,-43,-67,-163\} \\
	\mathcal{T} =\{\mathbb{Q}&(\sqrt{D}) : D=-1,-2,-3,-7,-11,-19,-43,-67,-163\}
\end{align*} 
where $\mathcal{T}$ is the set of all imaginary quadratic fields with class number 1. Unless stated otherwise, $K \in \mathcal{S}$ is fixed and $L/K$ stands for any quadratic extension.  We denote the kernel of the multiplication by $n$ map on an elliptic curve $E$ by $E[n].$ Given an elliptic curve $E/K$ with $E(K)[2] \simeq \mathbb{Z}/2\mathbb{Z} \oplus \mathbb{Z}/2\mathbb{Z},$ we work with the Weierstrass model $E(\alpha,\beta): y^2=x(x+\alpha)(x+\beta)$ with $ \alpha,\beta \in \mathcal{O}_K$ for $E$ and use the database \cite{lmfdb} for rational elliptic curves. Some of the proofs in this paper rely on computations in Magma \cite{Magma} and Sage \cite{sage}. The code can be found $\href{https://github.com/irmak-balcik/NonCyclicTorsion}{\bf{here}.}$

\subsection*{Acknowledgement} The author is indebted to her advisor Sheldon Kamienny for suggesting this problem and his kind support.  The author is grateful to Andreas Schweizer for communicating a proof of the case $\mathbb{Z}/2\mathbb{Z} \oplus \mathbb{Z}/{32}\mathbb{Z}$ and allowing us to include it in the paper. We heartily thank {\"O}zlem Ejder for helpful discussions and the referees for their valuable comments, which greatly improved the results and exposition of the paper.

\begin{table}[h!]
	\caption{Examples of Growth of $G \simeq \mathbb{Z}/2\mathbb{Z} \oplus \mathbb{Z}/8\mathbb{Z}$ over $K=\mathbb{Q}(\sqrt{-7})$ }\label{table1}
	\centering		
	\begin{tabular}{| p{5.7cm} | p{4cm} | p{3cm}  | p{3cm} | }
		\hline
		Model for $E(\alpha,\beta),$ \ $w=\sqrt{-7}$ &$E^{d}(K)_{\text{tor}}$ & $d \in K$  & $E(K(\sqrt{d}))_{\text{tor}}$    \\ 
		\hline
		$(729,2304)$  & $\mathbb{Z}/2\mathbb{Z} \oplus \mathbb{Z}/{4}\mathbb{Z}$ & $-1$ & $\mathbb{Z}/4\mathbb{Z} \oplus \mathbb{Z}/{8}\mathbb{Z}$ \ \ \   \\ 
		\hline
		$\big(\frac{1}{2}(93w + 449),24w - 248\big)$  & $\mathbb{Z}/2\mathbb{Z} \oplus \mathbb{Z}/{2}\mathbb{Z}$ & $-15$ 
		& $\mathbb{Z}/{2}\mathbb{Z} \oplus \mathbb{Z}/{16}\mathbb{Z}$ \ 
		\\
		\hline
	\end{tabular} 
\end{table}

\begin{table}[h!]
	\caption{Example of Growth of $G \simeq \mathbb{Z}/2\mathbb{Z} \oplus \mathbb{Z}/6\mathbb{Z}$ over $K=\mathbb{Q}(\sqrt{D})$ }\label{tab3}
	\centering		
	\begin{tabular}{ |p{1.68cm} | p{9.1cm} | p{2.5cm} | p{2.5cm} | }
		\hline
		$ w=\sqrt{D}$ & Model for $E(\alpha,\beta) $ & $E^{-3}(K)_{\text{tor}}$  & $E(K(\sqrt{-3}))_{\text{tor}}$  \\
		\hline
		$D=-19$ &	 $\alpha=\frac{1160294229092597760}{806954491}w-\frac{755235215206514688}{806954491}$ 
		
		$\beta= -215373816000w + 140186761425$
		& $ \mathbb{Z}/2\mathbb{Z} \oplus \mathbb{Z}/4\mathbb{Z}$      &$\mathbb{Z}/2\mathbb{Z} \oplus  \mathbb{Z}/{12}\mathbb{Z} $  \\
		\hline
		$D=-43$ & $\alpha = -745604858793266380996608000000$ 
		
		$\beta=58753522825309371976852674194001$  & $\mathbb{Z}/2\mathbb{Z} \oplus \mathbb{Z}/4\mathbb{Z}$      & $\mathbb{Z}/2\mathbb{Z} \oplus  \mathbb{Z}/{12}\mathbb{Z} $  \\
		\hline
		$D=-67$ & $\alpha=-6172456047563616239669327589015552\times10^6$ 
		
		$\beta=384655287276782132832531502066904806161$ & $\mathbb{Z}/2\mathbb{Z} \oplus \mathbb{Z}/4\mathbb{Z}$      & $\mathbb{Z}/2\mathbb{Z} \oplus  \mathbb{Z}/{12}\mathbb{Z} $\\
		\hline
	\end{tabular} 
\end{table}

\begin{table}[h!]
	\caption{Example of Growth $G \simeq \mathbb{Z}/2\mathbb{Z} \oplus \mathbb{Z}/6\mathbb{Z} \ \text{over\ any}\ K \in  \mathcal{S}$}\label{tab2}
	\centering		
	\begin{tabular}{| p{5.7cm} | p{4cm} | p{3cm} | p{3cm}| }
		\hline
		Model for $E(\alpha,\beta)$  & $E^{d}(K)_{\text{tor}}$ & $d \in K$  & $E(K(\sqrt{d}))_{\text{tor}}$    \\ 
		\hline 
		$(64,189)$  & $\mathbb{Z}/2\mathbb{Z} \oplus \mathbb{Z}/{2}\mathbb{Z}$ & $21$ &  $\mathbb{Z}/2\mathbb{Z} \oplus \mathbb{Z}/{12}\mathbb{Z}$ \\
		\hline 
	\end{tabular} 
\end{table}

\begin{table}[h!]
	\caption{Examples of Growth of $G \simeq \mathbb{Z}/2\mathbb{Z} \oplus \mathbb{Z}/6\mathbb{Z}$ over $K=\mathbb{Q}(\sqrt{D})$ }\label{tab4}
	\centering		
	\begin{tabular}{ | p{1.68cm} | p{9.1cm} | p{2.5cm} |  p{2.5cm} | }
		\hline
		$ w=\sqrt{D}$ & Model for $E(\alpha,\beta)$  & $E^{-3}(K)_{\text{tor}}$  & $E(K(\sqrt{-3}))_{\text{tor}}$   \\ 
		\hline
		$D=-7$  &  $\big(\frac{1}{2}(21w - 39),\frac{1}{2}(-21w - 39)\big)$ &$\mathbb{Z}/2\mathbb{Z} \oplus \mathbb{Z}/6\mathbb{Z}$  & $\mathbb{Z}/6\mathbb{Z} \oplus \mathbb{Z}/{6}\mathbb{Z}$ \\ 
		\hline
		$D=-11$ & $(-19514w+101438,-6912w+214272)$
		& $\mathbb{Z}/2\mathbb{Z} \oplus \mathbb{Z}/6\mathbb{Z}$   & $\mathbb{Z}/6\mathbb{Z} \oplus \mathbb{Z}/6\mathbb{Z}$             \\
		\hline
		$D=-19$ & $\alpha=10993263062353152w-37976902494666157$ \
		
		$\beta  = -10437125916000000w+40849621725921875$ & $\mathbb{Z}/2\mathbb{Z} \oplus \mathbb{Z}/6\mathbb{Z}$ 
		&  $\mathbb{Z}/6\mathbb{Z} \oplus  \mathbb{Z}/6\mathbb{Z} $      \\
		\hline
	\end{tabular} 
\end{table}

\begin{table}[h!]
	\caption{Example of Growth $G \simeq \mathbb{Z}/2\mathbb{Z} \oplus \mathbb{Z}/4\mathbb{Z} \ \text{over\ any}\ K \in  \mathcal{S}$}\label{tab2x4-2x8}
	\centering		
	\begin{tabular}{| p{5.7cm} | p{4cm} | p{3cm} | p{3cm}| }
		\hline
		Model for $E(\alpha,\beta)$  & $E^{d}(K)_{\text{tor}}$ & $d \in K$  & $E(K(\sqrt{d}))_{\text{tor}}$    \\ 
		\hline 
		$(1,16)$  & $\mathbb{Z}/2\mathbb{Z} \oplus \mathbb{Z}/{2}\mathbb{Z}$ & $5$ &  $\mathbb{Z}/2\mathbb{Z} \oplus \mathbb{Z}/{8}\mathbb{Z}$ \\
		\hline 
	\end{tabular} 
\end{table}

\begin{table}[h!]
	\caption{Examples of Growth $G \simeq \mathbb{Z}/2\mathbb{Z} \oplus \mathbb{Z}/4\mathbb{Z} \ \text{over\ any}\ K = \mathbb{Q}(\sqrt{D}) \in \mathcal{S} $}\label{tab5}
	\centering		
	\begin{tabular}{ |p{5.7cm} | p{4cm} | p{3cm} | p{3cm} | }
		\hline
		$w=\sqrt{D}$ & Model for $E(\alpha,\beta)$  & $E^{-1}(K)_{\text{tor}}$  & $E(K(\sqrt{-1}))_{\text{tor}}$ \\
		\hline
		$D=-2$ & $(1, -8) $ &  $\mathbb{Z}/2\mathbb{Z} \oplus \mathbb{Z}/{4}\mathbb{Z}$ & $\mathbb{Z}/4\mathbb{Z} \oplus \mathbb{Z}/{4}\mathbb{Z}$ \\
		\hline
		$D=-7$& $(7569,7744)$  & $\mathbb{Z}/2\mathbb{Z} \oplus \mathbb{Z}/{4}\mathbb{Z}$ & $\mathbb{Z}/4\mathbb{Z} \oplus \mathbb{Z}/{4}\mathbb{Z}$ \\
		\hline
		$D=-11,-19,-43,-67,-163$ & $(16,25)$ &$\mathbb{Z}/2\mathbb{Z} \oplus \mathbb{Z}/{4}\mathbb{Z}$ & $\mathbb{Z}/4\mathbb{Z} \oplus \mathbb{Z}/{4}\mathbb{Z}$ \\
		\hline 
	\end{tabular} 
\end{table}

\begin{table}[h!]
	\caption{Examples of Growth $G \simeq \mathbb{Z}/2\mathbb{Z} \oplus \mathbb{Z}/2\mathbb{Z} \ \text{over some } K \in \mathcal{S}$} \label{tab88}
	\centering		
	\begin{tabular}{|p{2cm}| p{4cm} | p{4cm} | p{2cm} | p{3cm} | }
		\hline
		$w=\sqrt{D}$ &Model for $E(\alpha,\beta)$  & $E^{d}(K)_{\text{tor}}$  & $d \in K$ & $E(K(\sqrt{d}))_{\text{tor}}$    \\ 
		\hline
		$D=-11$ & $(2,-2)$ & $\mathbb{Z}/2\mathbb{Z} \oplus \mathbb{Z}/{2}\mathbb{Z}$ & $2$ & $\mathbb{Z}/2\mathbb{Z} \oplus \mathbb{Z}/4\mathbb{Z}$ \\
		\hline
		$D=-2$&$(-1,-2)$  & $\mathbb{Z}/2\mathbb{Z} \oplus \mathbb{Z}/{2}\mathbb{Z}$ & $-1$ & $\mathbb{Z}/4\mathbb{Z} \oplus \mathbb{Z}/{4}\mathbb{Z}$ \\
		\hline
		$D=-2$ &$(400,405)$ &  $\mathbb{Z}/2\mathbb{Z} \oplus \mathbb{Z}/4\mathbb{Z}$ & $-5$ & $\mathbb{Z}/2\mathbb{Z} \oplus \mathbb{Z}/8\mathbb{Z}$ \\
		\hline
	\end{tabular} 
\end{table}

\section{Auxiliary Results}

Sarma and Saikia \cite{SS18} have determined the complete list of torsion groups arising as $E(K)_{\text{tor}}$ where $E$ is an elliptic curve defined over any $K$ in $\mathcal{S}.$ We combine their results into a standalone theorem as follows.

\begin{thm}[\cite{SS18}]\label{mainlist}
	Let $K\in \mathcal{S}$ be fixed and let $E/K$ be any elliptic curve. 
		\begin{enumerate}
			\item If $K=\mathbb{Q}(\sqrt{-2}),$ then $E(K)_{\text{tor}}$ is isomorphic to either one of the groups in \eqref{eq:mazurgroups}, $\mathbb{Z}/{11}\mathbb{Z}\ \text{or}\  \mathbb{Z}/2\mathbb{Z} \oplus \mathbb{Z}/{10}\mathbb{Z}.$
			\item If $K=\mathbb{Q}(\sqrt{-7}),$ then $E(K)_{\text{tor}}$ is isomorphic to either one of the groups in \eqref{eq:mazurgroups}, $\mathbb{Z}/{11}\mathbb{Z}, \mathbb{Z}/{14}\mathbb{Z} \ \text{or}\  \mathbb{Z}/{15}\mathbb{Z}.$
			\item If $K=\mathbb{Q}(\sqrt{-11}),$ then $E(K)_{\text{tor}}$ is isomorphic to either one of the groups in \eqref{eq:mazurgroups},
			$\mathbb{Z}/{14}\mathbb{Z},  \mathbb{Z}/{15}\mathbb{Z}\   \text{or}\   \mathbb{Z}/2\mathbb{Z} \oplus \mathbb{Z}/{10}\mathbb{Z}.$
			\item If $K=\mathbb{Q}(\sqrt{-19}),$ then $E(K)_{\text{tor}}$ is isomorphic to either one of the groups in \eqref{eq:mazurgroups},
			$\mathbb{Z}/{11}\mathbb{Z},  \mathbb{Z}/{2}\mathbb{Z} \oplus \mathbb{Z}/{10}\mathbb{Z} \ \text{or}\ \ \mathbb{Z}/2\mathbb{Z} \oplus \mathbb{Z}/{12}\mathbb{Z}.$
			\item If $K=\mathbb{Q}(\sqrt{-43}),$ then $E(K)_{\text{tor}}$ is isomorphic to either one of the groups in \eqref{eq:mazurgroups},
			$\mathbb{Z}/{11}\mathbb{Z}, \mathbb{Z}/{14}\mathbb{Z}, \mathbb{Z}/{15}\mathbb{Z}\  \text{or}\  \mathbb{Z}/2\mathbb{Z} \oplus \mathbb{Z}/{12}\mathbb{Z}.$
			\item If $K = \mathbb{Q}(\sqrt{-67})$ then $E(K)_{\text{tor}}$ is isomorphic to either one of the groups in \eqref{eq:mazurgroups},
			$\mathbb{Z}/{14}\mathbb{Z},  \mathbb{Z}/{15}\mathbb{Z}\  \text{or}\  \mathbb{Z}/2\mathbb{Z} \oplus \mathbb{Z}/{12}\mathbb{Z}.$
			\item If $K=\mathbb{Q}(\sqrt{-163}),$ then $E(K)_{\text{tor}}$ is isomorphic to either one of the groups in \eqref{eq:mazurgroups},
			$\mathbb{Z}/{14}\mathbb{Z},  \mathbb{Z}/{15}\mathbb{Z} \ \text{or}\ \mathbb{Z}/2\mathbb{Z} \oplus \mathbb{Z}/{12}\mathbb{Z}.$
		\end{enumerate} 
\end{thm}

The next result is the generalization of \cite[Proposition 1]{Kwo97}, in which $K=\mathbb{Q},$ to number fields. This allows us to bound the growth of $E(K)_{\text{tor}}$ upon quadratic base change. Note that $P-\sigma(P)=(x,y)$ in $E^d(K)$ is understood as $(x,y/\sqrt{d})$ in the following statement.

\begin{prop}\label{mainprop1}(\cite{Kwo97})
	Let K be a number field, $L=K(\sqrt{d})$ a quadratic extension of K and let $\sigma$ denote the generator of $Gal(L/K)$. Given any elliptic curve $E/K$, there exists a homomorphism $h$ defined by
	\begin{align*}
	 E(L)_{\text{tor}} &\xrightarrow{h} E^d(K)_{\text{tor}} \\
	 P &\mapsto P- \sigma(P) 
	\end{align*}
	with ker$(h)=E(K)_{\text{tor}}$ and it induces an injection
	$ E(L)_{\text{tor}} / E(K)_{\text{tor}} \hookrightarrow E^{d}(K)_{\text{tor}}.$
\end{prop}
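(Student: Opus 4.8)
The plan is to verify that the stated map $h$ is a well-defined group homomorphism on the quadratic twist, compute its kernel, and then invoke the first isomorphism theorem to produce the claimed injection. First I would recall the concrete relationship between $E$ and its quadratic twist $E^d$ over $L = K(\sqrt{d})$: there is an $L$-isomorphism $\phi\colon E \to E^d$ that intertwines the Galois action, in the sense that $\sigma$ acts on $E(L)$ in the obvious way while the corresponding twisted action on $E^d$ differs by the sign coming from $\sigma(\sqrt{d}) = -\sqrt{d}$. Concretely, for $E\colon y^2 = x^3 + ax + b$ the twist is $E^d\colon dy^2 = x^3+ax+b$, and the map sending $(x,y) \mapsto (x, \sqrt{d}\,y)$ realizes the isomorphism over $L$. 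The key algebraic fact is that $\sigma$ acts $\mathcal{O}_K$-linearly (it is a field automorphism fixing $K$), hence $\sigma$ commutes with the group law on $E$, so $P \mapsto \sigma(P)$ is a group automorphism of $E(L)$.

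Next I would show $h$ lands in $E^d(K)_{\text{tor}}$ and is a homomorphism. Since $\sigma$ is a group automorphism, the map $P \mapsto P - \sigma(P)$ is additive: $h(P+Q) = (P+Q) - \sigma(P+Q) = (P - \sigma(P)) + (Q - \sigma(Q)) = h(P) + h(Q)$, using that $\sigma$ preserves addition. The image of $h$ is $\sigma$-anti-invariant, i.e.\ $\sigma(h(P)) = \sigma(P) - \sigma^2(P) = \sigma(P) - P = -h(P)$ because $\sigma^2 = \mathrm{id}$; translating this anti-invariance through the twisting isomorphism $\phi$ turns it into genuine $\sigma$-invariance on $E^d$, so the values genuinely lie in $E^d(K)_{\text{tor}}$. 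I would want to be careful here that $h(P)$ is torsion: this is immediate because $P$ is torsion and $\sigma$ preserves torsion order, so $P - \sigma(P)$ is again torsion.

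Then I would compute the kernel. We have $h(P) = O$ if and only if $P = \sigma(P)$, i.e.\ $P$ is fixed by the generator of $\mathrm{Gal}(L/K)$, which by Galois descent (applied coordinatewise to the point $P$) is equivalent to $P \in E(K)$. Thus $\ker h = E(K)_{\text{tor}}$, which is also what forces $E(K)_{\text{tor}}$ to be a subgroup of $E(L)_{\text{tor}}$ in the first place. Applying the first isomorphism theorem to $h\colon E(L)_{\text{tor}} \to E^d(K)_{\text{tor}}$ yields the injection
\[
  E(L)_{\text{tor}} / E(K)_{\text{tor}} \;\hookrightarrow\; E^d(K)_{\text{tor}}.
\]

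The main obstacle, and the only step requiring genuine care, is correctly bookkeeping the twisting isomorphism so that the target is literally $E^d(K)$ rather than some abstractly defined quotient: one must check that the $L$-isomorphism $\phi\colon E \to E^d$ sends the $\sigma$-anti-invariant subgroup of $E(L)$ onto the $\sigma$-invariant (hence $K$-rational) subgroup of $E^d(L)$. This is a formal consequence of how $\sigma$ acts on $\sqrt{d}$, but it is where the definition of the quadratic twist is actually used, so I would state that cocycle/sign computation explicitly rather than leave it implicit. Everything else — additivity, the torsion claim, and the kernel computation — is routine Galois-equivariance of the group law.
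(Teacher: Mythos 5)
Your proof is correct and follows exactly the intended route: the paper offers no proof of Proposition \ref{mainprop1}, deferring to Kwon \cite{Kwo97}, and your argument (additivity of $P\mapsto P-\sigma(P)$, the anti-invariance $\sigma(h(P))=-h(P)$ converted to $K$-rationality on the twist via the twisting isomorphism, kernel $=E(K)_{\text{tor}}$ by $\mathrm{Gal}(L/K)$-fixedness of coordinates, then the first isomorphism theorem) is precisely that standard computation, including the one point of genuine care you flag, namely that the sign coming from $\sigma(\sqrt{d})=-\sqrt{d}$ is what lands the image in $E^d(K)$. One cosmetic slip: with your model $E^d\colon dy^2=x^3+ax+b$, the map $(x,y)\mapsto(x,\sqrt{d}\,y)$ is the isomorphism $E^d\to E$ (this is the direction the paper uses in the proof of Proposition \ref{mn}), so your $\phi\colon E\to E^d$ should be its inverse $(x,y)\mapsto\bigl(x,y/\sqrt{d}\bigr)$; nothing downstream is affected.
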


It is already known that the odd-torsion part of $E(L)_{\text{tor}}$ is well understood by two torsion structures that can occur over $K.$

\begin{lem}(\cite{GJT14},Cor 4)\label{lem2}
	If n is an odd positive integer we have
	$$ E(K(\sqrt{d}))[n] \simeq E(K)[n] \oplus E^d(K)[n]. $$
\end{lem}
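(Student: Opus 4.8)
The plan is to exploit the action of $\mathrm{Gal}(L/K) = \langle \sigma \rangle$ (with $L = K(\sqrt{d})$) on the finite group $E(L)[n]$ and to split it into eigenspaces for $\sigma$, which is possible precisely because $n$ is odd. First I would observe that $\sigma^2 = \mathrm{id}$, so on the $\mathbb{Z}/n\mathbb{Z}$-module $E(L)[n]$ the operator $\sigma$ satisfies $\sigma^2 = 1$. Since $n$ is odd, $2$ is invertible modulo $n$, so for any $P \in E(L)[n]$ I can write
$$ P = \tfrac{1}{2}\bigl(P + \sigma(P)\bigr) + \tfrac{1}{2}\bigl(P - \sigma(P)\bigr), $$
where $\tfrac{1}{2}$ denotes the inverse of multiplication-by-$2$ on $E(L)[n]$. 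The first summand lies in the $(+1)$-eigenspace $E(L)[n]^{+} = \{P : \sigma(P)=P\}$ and the second in the $(-1)$-eigenspace $E(L)[n]^{-} = \{P : \sigma(P) = -P\}$; since these eigenspaces intersect trivially (again using that $2$ is invertible), I obtain a direct sum decomposition $E(L)[n] = E(L)[n]^{+} \oplus E(L)[n]^{-}$.

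Next I would identify the two eigenspaces. The $(+1)$-eigenspace consists exactly of the $\sigma$-fixed points, hence $E(L)[n]^{+} = E(K)[n]$. For the $(-1)$-eigenspace I would bring in the quadratic twist: let $\phi : E \to E^d$ be the twisting isomorphism, which is defined over $L$ and satisfies the cocycle relation $\phi^{\sigma} = [-1]\circ \phi$. A direct computation using $\sigma(\phi(P)) = \phi^{\sigma}(\sigma(P))$ then shows that for $P \in E(L)$ one has $\sigma(P) = -P$ if and only if $\phi(P)$ is $\sigma$-fixed; that is, $\phi$ restricts to an isomorphism $E(L)[n]^{-} \xrightarrow{\sim} E^d(K)[n]$. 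Combining this with the decomposition above yields $E(K(\sqrt{d}))[n] \simeq E(K)[n] \oplus E^d(K)[n]$, as claimed.

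As a cross-check on surjectivity I would reconcile this with the map $h : P \mapsto P - \sigma(P)$ of Proposition \ref{mainprop1}. Restricted to $E(L)[n]$, the map $h$ has kernel $E(K)[n]$ and image inside $E(L)[n]^{-}$, since $\sigma\bigl(P-\sigma(P)\bigr) = -\bigl(P-\sigma(P)\bigr)$; and on $E(L)[n]^{-}$ the map $h$ acts as multiplication by $2$, which is an automorphism for odd $n$. Hence $h$ surjects onto $E(L)[n]^{-} \cong E^d(K)[n]$, upgrading the injection of Proposition \ref{mainprop1} to an isomorphism and exhibiting a splitting of the short exact sequence $0 \to E(K)[n] \to E(L)[n] \to E^d(K)[n] \to 0$.

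The step I expect to require the most care is the identification $E(L)[n]^{-} \simeq E^d(K)[n]$ through the twist $\phi$: one must fix the correct normalization of $\phi$ and verify the cocycle relation $\phi^{\sigma} = [-1]\circ\phi$, since this is exactly what converts $\sigma$-anti-invariance on $E$ into $K$-rationality on $E^d$. Everything else — the eigenspace splitting and the fixed-point identification — is formal once $2$ is known to be invertible, which is the sole place where the hypothesis that $n$ is odd enters.
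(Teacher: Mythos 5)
Your proof is correct. The paper offers no proof of this lemma at all---it is imported directly from \cite{GJT14} (Corollary 4)---and your argument (splitting $E(L)[n]$ into $\sigma$-eigenspaces using the invertibility of $2$ modulo odd $n$, identifying the $(+1)$-eigenspace with $E(K)[n]$ and the $(-1)$-eigenspace with $E^d(K)[n]$ via the twisting isomorphism $\phi$ with $\phi^{\sigma}=[-1]\circ\phi$) is precisely the standard proof of the cited result, with the cocycle normalization you flag as delicate checking out on the paper's models, since $\phi:(x,y)\mapsto(dx,d^{3/2}y)$ carries $E$ to $E^d$ and $\sigma(d^{3/2})=-d^{3/2}$ gives exactly $\phi^{\sigma}=[-1]\circ\phi$.
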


Next, we recall the 2-divisibility condition which plays a key role in studying torsion growth.

\begin{lem}(\cite{Kna92})\label{lem1}
	Let E be an elliptic curve over a field k with $char(k)\neq 2,3.$ Suppose E is given by 
	$$ y^2=(x-\alpha)(x-\beta)(x-\gamma) $$
	with $\alpha, \beta, \gamma \in k.$ For $P=(x,y)$ in $E(k)$, there exists a k-rational point $Q$ on E such that 2$Q=P$ if and only if $x-\alpha, x-\beta$, and $x-\gamma$ are all squares in k.  \par 
\end{lem}

The following theorem which is due Newman, is a variant of the result of Ono \cite{Ono96}.

\begin{thm}(\cite{New17}) \label{newman}
	Let $K$ be a number field and $E/K$ be an elliptic curve with full 2-torsion. Then $E$ has a model of the form $y^2=x(x+\alpha)(x+\beta)$ where $\alpha,\beta \in \mathcal{O}_K.$
	
	\begin{enumerate}
		\item $E(K)$ has a point of order 4 if and only if $\alpha,\beta$ are both squares, or $-\alpha, \beta-\alpha$ are both squares, or $-\beta,\alpha-\beta$ are both squares in $\mathcal{O}_K.$
		\item $E(K)$ has a point of order 8 if and only if there exists a $c \in \mathcal{O}_K$, $c\neq 0$ and a Pythagorean triple $(u,v,w)$ with $u^2+v^2=w^2$ such that 
		$$ \alpha = c^2u^4, \beta =c^2v^4 $$
		or we can replace $\alpha,\beta$ by $-\alpha,\beta-\alpha$ or $-\beta,\alpha-\beta$ as in the first case.
		\item $E(K)$ contains a point of order 3 if and only if there exists $a,b,c \in \mathcal{O}_K$ with $c\neq 0$ and $\frac{a}{b} \notin \{-2,-1,-\frac{1}{2},0,1\}$ such that $\alpha=a^3(a+2b)c^2$ and $\beta=b^3(b+2a)c^2.$
	\end{enumerate}
\end{thm}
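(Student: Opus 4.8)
The plan is to read off the model and then treat each divisibility statement through the halving criterion of Lemma~\ref{lem1}. For the model: full $2$-torsion means all three $2$-torsion points are $K$-rational, so $E$ is $K$-isomorphic to $y^2=(x-e_1)(x-e_2)(x-e_3)$ with $e_i\in K$; translating $x$ to send $e_1$ to $0$ and scaling $(x,y)\mapsto(u^2x,u^3y)$ to clear denominators puts $E$ in the form $y^2=x(x+\alpha)(x+\beta)$ with $\alpha,\beta\in\mathcal{O}_K$. I will use freely that for $\delta\in\mathcal{O}_K$ the phrases ``$\delta$ is a square in $K$'' and ``$\delta$ is a square in $\mathcal{O}_K$'' agree, since $\mathcal{O}_K$ is integrally closed. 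The nonzero $2$-torsion points are $T_1=(0,0)$, $T_2=(-\alpha,0)$, $T_3=(-\beta,0)$.

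For part (1), a point of order $4$ exists iff some $T_i$ (a point of order exactly $2$) lies in $2E(K)$, i.e.\ is halved. Applying Lemma~\ref{lem1} to $T_1=(0,0)$ requires $0,\alpha,\beta$ to be squares; to $T_2=(-\alpha,0)$ requires $-\alpha,\beta-\alpha$ to be squares; to $T_3=(-\beta,0)$ requires $-\beta,\alpha-\beta$ to be squares. These are exactly the three stated alternatives. For part (2), a point of order $8$ exists iff some point of order $4$ is halvable. The substitutions replacing $(\alpha,\beta)$ by $(-\alpha,\beta-\alpha)$ or $(-\beta,\alpha-\beta)$ are precisely the translations of $x$ carrying $T_2$, $T_3$ to the origin, so I may reduce to the order-$4$ point $Q$ with $2Q=T_1$; by part (1) this forces $\alpha=a^2$, $\beta=b^2$, and then $Q=(ab,\,ab(a+b))$. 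Halving $Q$ via Lemma~\ref{lem1} demands that $ab$, $a(a+b)$, $b(a+b)$ all be squares. Once $ab$ is a square I can write $a=zu^2$, $b=zv^2$ (common squarefree part $z$, legitimate by unique factorization in $\mathcal{O}_K$ together with the scaling freedom $(a,b,z)\mapsto(ta,tb,z/t^2)$ that fixes $\alpha,\beta$); then $\alpha=z^2u^4$, $\beta=z^2v^4$, and both remaining conditions collapse to $u^2+v^2=w^2$, the Pythagorean relation.

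For part (3), the $\Leftarrow$ direction is a verification: given the parametrization, set $P=\big((abz)^2,\,a^2b^2z^3(a+b)^2\big)$ and check $x_0+\alpha=(az(a+b))^2$ and $x_0+\beta=(bz(a+b))^2$, so $P\in E(K)$, and confirm $x(2P)=x(P)$, i.e.\ $3P=O$; the excluded ratios $a/b\in\{-2,-1,-\tfrac12,0,1\}$ are exactly those giving $\alpha=0$, $\beta=0$, or $\alpha=\beta$, i.e.\ the degenerate cases. For $\Rightarrow$, the key observation is that full $2$-torsion forces any order-$3$ point into $2E(K)$: for a $2$-torsion point $T$ one has $2(P+T)=-P$, so $-P\in 2E(K)$ and hence $P\in 2E(K)$. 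By Lemma~\ref{lem1} this makes $x_0,\,x_0+\alpha,\,x_0+\beta$ squares, say $s^2,p^2,q^2$; feeding these into the $3$-division polynomial $\psi_3(x_0)=3x_0^4+4(\alpha+\beta)x_0^3+6\alpha\beta x_0^2-\alpha^2\beta^2=0$ reduces the vanishing condition to the single relation $pq=s(p+q)$, after which $a=p$, $b=q$, $z=s/(pq)$ (cleared into $\mathcal{O}_K$ by the same scaling/factorization device) recover the asserted form.

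I expect the main obstacle to be part (3): both the algebraic simplification of $\psi_3(x_0)=0$ down to the clean relation $pq=s(p+q)$, and the integrality bookkeeping needed to pass from a solution with denominators to an honest triple $(a,b,z)\in\mathcal{O}_K$ via the scaling symmetry and unique factorization. The same factorization step (extracting $a=zu^2$, $b=zv^2$ from ``$ab$ a square'') is the only delicate point in part (2); parts (1) and the $\Leftarrow$ direction of (3) are routine given Lemma~\ref{lem1}.
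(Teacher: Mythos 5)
The paper offers no proof of Theorem~\ref{newman}: it is imported wholesale from \cite{New17} (itself a variant of Ono \cite{Ono96}), so there is no internal argument to compare against. Your reconstruction is, in substance, the standard Ono--Kwon--Newman proof, and its core computations check out. Part (1) is exactly Lemma~\ref{lem1} applied to the three $2$-torsion points. In part (2), the halves of $(0,0)$ have $x$-coordinate $\pm ab$ (the sign is absorbed by $b\mapsto -b$, which you should say explicitly), and halving reduces to $ab$, $a(a+b)$, $b(a+b)$ being squares, as you state. In part (3), your claimed reduction is genuinely correct: with $x_0=s^2$, $x_0+\alpha=p^2$, $x_0+\beta=q^2$ one has the identity
\[
\psi_3(s^2)\;=\;-\,(pq-sp-sq)(pq+sp+sq)(pq-sp+sq)(pq+sp-sq),
\]
so after renormalizing the signs of $s,p,q$ the vanishing of $\psi_3$ is exactly $pq=s(p+q)$, and then $(a,b,z)=(p,q,1/(p+q))$ gives $\alpha=p^3(p+2q)/(p+q)^2=a^3(a+2b)z^2$ and similarly for $\beta$; I verified both identities. (For ``$P\in 2E(K)$'' you can shorten your $T$-trick: since $3P=O$, $P=4P=2(2P)$.)

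The one genuine weak point is integrality. The theorem is stated for an arbitrary number field, but your extraction of $a=zu^2$, $b=zv^2$ from ``$ab$ is a square'' in part (2), and the final passage from $(p,q,1/(p+q))\in K^3$ to an honest triple in $\mathcal{O}_K$ in part (3), invoke unique factorization in $\mathcal{O}_K$, which fails in general. Moreover there is a real tension you gloss over: the symmetry $(a,b,z)\mapsto(ta,tb,z/t^2)$ clears denominators of $a,b$ while worsening $z$, and conversely, so in a non-UFD ring one cannot obviously arrange all three to be integral at once; class number 1 is what rescues this, which covers every field in $\mathcal{T}$ where the paper actually applies the theorem (and matches the setting of \cite{New17}) --- so your proof is adequate for the paper's purposes but not for the statement's claimed generality, a defect it arguably inherits from the citation itself. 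Two smaller slips: the scaling you quote in part (2), $(a,b,z)\mapsto(ta,tb,z/t^2)$, does not fix $\alpha=a^2$; the relevant freedom there is $(z,u,v,w)\mapsto(z/t^2,tu,tv,tw)$, which fixes $a=zu^2$, $b=zv^2$ (in a UFD you can instead take $z$ to be a squarefree part directly, with routine unit bookkeeping, trivial when the unit group is $\{\pm1\}$). And in part (3) $\Rightarrow$ you should note that nonsingularity of $E$ forces $p/q\notin\{-2,-1,-\tfrac12,0,1\}$ and $p+q\neq 0$, mirroring the degeneracy check you did in the $\Leftarrow$ direction.
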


\section{Certain Isogenies}
Given an elliptic curve $E$ over a number field $K$, we call a subgroup $C$ of $E(\overline{K})$ of order $N$ a  $\mathbf{K}$-$\mathbf{rational}$ $\mathbf{N}$-$\mathbf{isogeny}$ 
if there exists an elliptic curve $E'/K$ and an isogeny $\phi : E \rightarrow E'$ over $K$ such that $C=ker(\phi).$ Equivalently, an order $N$-subgroup $C$ of $E(\overline{K})$ is a $K$-rational $N$-isogeny if it is invariant under the action of Gal$(\overline{K}/K).$ Note that $C$ might be $K$-rational even though it contains points that are not.  In what follows, we collect a number of classical results on $K$-rational isogenies which will be often used.
\begin{prop}\label{mn}
    Let $K$ be a number field and $E/K$ an elliptic curve with $|E(K)_{\text{tor}}|=m.$ If $H \subseteq E^d(K)$ is a subgroup of odd order $n$ for $d\in K$ a non-square,  then $E$ has a $K$-rational $mn$-isogeny pointwise defined over $K(\sqrt{d}).$ 
\end{prop}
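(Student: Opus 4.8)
The plan is to move the odd-order points of $E^d(K)$ onto $E$ over $L=K(\sqrt d)$ through the twisting isomorphism, and then to amalgamate them with $E(K)_{\text{tor}}$. Write $\sigma$ for the generator of $\mathrm{Gal}(L/K)$. In suitable Weierstrass coordinates, with $E\colon y^2=x^3+ax+b$ and its twist $E^d\colon y^2=x^3+ad^2x+bd^3$, there is an isomorphism $\phi\colon E^d\xrightarrow{\sim}E$ defined over $L$ by $\phi(x,y)=\bigl(x/d,\,y/(d\sqrt d)\bigr)$. The crucial input is the Galois behaviour of $\phi$: for a point $P\in E^d(K)$ one computes $\sigma\bigl(\phi(P)\bigr)=-\phi(P)$, since applying $\sigma$ only flips the sign of $\sqrt d$ in the $y$-coordinate and negation on $E$ is $(X,Y)\mapsto(X,-Y)$. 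This single sign relation is the engine of the whole argument.

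Next I would set $C:=E(K)_{\text{tor}}+\phi(H)\subseteq E(L)$. Both summands are finite subgroups of $E(L)$, so $C$ is a finite subgroup all of whose points are $L$-rational; that is, $C$ is pointwise defined over $K(\sqrt d)$. To see that $C$ is the kernel of a $K$-rational isogeny it is enough to check that $C$ is stable under $\mathrm{Gal}(\overline K/K)$, and since $C\subseteq E(L)$ this reduces to stability under $\sigma$. Now $\sigma$ fixes $E(K)_{\text{tor}}$ pointwise, while by the sign relation it sends $\phi(H)$ to $-\phi(H)=\phi(H)$ (a group is closed under negation); hence $\sigma(C)=C$. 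Therefore the quotient map $E\to E/C$ is an isogeny defined over $K$ whose kernel is $C$.

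It then remains to identify $C$ and establish that it is cyclic of order $mn$. The key point is that $E(K)_{\text{tor}}\cap\phi(H)$ is trivial: any $Q$ in the intersection is fixed by $\sigma$ (lying in $E(K)$) and negated by $\sigma$ (lying in $\phi(H)$), so $2Q=O$; but $Q$ also lies in $\phi(H)$, a group of odd order $n$, which forces $Q=O$. Hence the sum is direct, $C\cong E(K)_{\text{tor}}\oplus\phi(H)$ is of order $mn$, and $\phi(H)\cong H$ is a cyclic group of order $n$. I expect the cyclicity of $C$ to be the main obstacle: the direct sum is cyclic exactly when $E(K)_{\text{tor}}$ is cyclic and $\gcd(m,n)=1$. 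The oddness of $n$ already guarantees that the $2$-primary parts do not interfere, and in the setting where the proposition is applied $E(K)_{\text{tor}}$ is cyclic of order prime to $n$, so that $C\cong\mathbb{Z}/m\mathbb{Z}\oplus\mathbb{Z}/n\mathbb{Z}\cong\mathbb{Z}/mn\mathbb{Z}$; isolating and justifying precisely this coprime-cyclic structure is the step I would treat with the most care.
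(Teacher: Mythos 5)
Your argument is correct and its skeleton is the same as the paper's: transport $H$ to $E$ via the twisting isomorphism, add it to $E(K)_{\text{tor}}$, and check Galois stability of the resulting subgroup of $E(K(\sqrt d))$. There are two genuine differences worth recording. First, where the paper deduces the triviality of $\tau(H)\cap E(K)_{\text{tor}}$ by citing Lemma~\ref{lem2} (the decomposition $E(K(\sqrt d))[n]\simeq E(K)[n]\oplus E^d(K)[n]$ for odd $n$, from \cite{GJT14}), you get it elementarily from the sign relation $\sigma(\phi(P))=-\phi(P)$: an intersection point is simultaneously fixed and negated by $\sigma$, hence $2$-torsion, hence trivial inside a group of odd order. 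This removes the dependence on the external lemma and also makes the Galois action on $\phi(H)$ explicit, which the paper leaves implicit in the phrase ``Gal$(\overline K/K)$-invariant.'' Second, and more importantly, you explicitly confront the cyclicity of the kernel, which the paper's proof does not: it simply declares $G=\tau(H)\oplus E(K)_{\text{tor}}$ to be a \emph{cyclic} $mn$-isogeny, yet with $m=|E(K)_{\text{tor}}|$ and $E(K)_{\text{tor}}$ non-cyclic --- which is the situation in every application in Section 4, where $E$ has full $2$-torsion --- the direct sum is not cyclic, and nothing in the statement forces $H$ to be cyclic or $\gcd(m,n)=1$ either. Your closing observation isolates exactly the hypotheses under which cyclicity holds ($H$ cyclic and the relevant subgroup of $E(K)$ cyclic of order prime to $n$), and this matches how Proposition~\ref{mn} is actually invoked in the paper (e.g., a $K$-rational point of order $6$ together with a $5$-torsion point on the twist producing a cyclic $30$-isogeny, with the cyclic subgroup generated by that point playing the role of $E(K)_{\text{tor}}$). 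So your more cautious route not only proves the proposition but also repairs an imprecision in both its statement and the paper's proof.
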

\begin{proof}
   With respect to a short Weierstrass model fixed for $E,$ there exists an isomorphism $\tau:E^d \rightarrow E$ defined by $\tau((x,y))=(x,y\sqrt{d}).$ Since $H$ is a subgroup pointwise defined over $K,$ the image $\tau(H)$ of $H$ forms a Gal$(\overline{K}/K)$-invariant subgroup of $E(K(\sqrt{d}))$ of odd order $n.$ Moreover, $\tau(H) \cap E(K)_{\text{tor}}$ is trivial since $\tau(H)$ has no order $2$ points. Hence, $C:=\tau(H) \oplus E(K)_{\text{tor}}$ is a Gal$(\overline{K}/K)$-invariant subgroup of $E(K(\sqrt{d})).$ Therefore, $C $ is a $K$-rational $mn$-isogeny of $E$ pointwise defined over $E(K(\sqrt{d})).$
\end{proof}

\begin{thm}\label{isogeny}
	Let $K$ be fixed in $\mathcal{S}$ and let $E/K$ be an elliptic curve. Then, $E(\overline{K})$ has no $K$-rational cyclic $N$-isogeny for $N=40, 48$. 
\end{thm}

\begin{proof}
The modular curves $X_0(40)$ and $X_0(48)$ are hyperelliptic curves of genus 3 and admit the following equations by \cite[Section 4.3]{Rov98}
\begin{align*}
y^2 &=f_{40}(x)= x^8 + 8x^6 - 2x^4 + 8x^2 +1  \\
y^2 &= f_{48}(x)= x^8 + 14x^4 + 1. 
\end{align*}

Given any $N \in \{40,48\},$ let $P$ be a non-cuspidal quadratic point of $X_0(N)$ such that $P= (x, \pm \sqrt{f_N(x)})$ with $x \in \mathbb{Q}.$ Since $f_{N}(x) >  0$ for all $x \in \mathbb{R},$ $P$ must be defined over a real quadratic field, from which it follows that there are no non-cuspidal points in $X_0(N)(K)$ with rational $x$-coordinate. On the other hand, all quadratic points of $X_0(N)$ with non-rational $x$-coordinate are completely classified in \cite[Table 11,15] {BN15} and none of these points are defined over a field $K$ from $\mathcal{S}.$ 
In conclusion, $X_0(N)(K) = X_0(N)(\mathbb{Q})$ which consists only of cusps, proving the statement.
\end{proof}

\section{Classification of Twists}
One benefit of Theorem \ref{newman} is that any elliptic curve $E/K$ with full 2-torsion has a model
$E(\alpha,\beta): y^2=x(x+\alpha)(x+\beta)$
where $\alpha,\beta \in \mathcal{O}_K$. Note that $E(\alpha,\beta)$ is isomorphic to $E(-\alpha,\beta-\alpha), E(-\beta,\alpha-\beta)$ via $ (x,y) \mapsto (x-\alpha,y), (x,y) \mapsto (x-\beta,y) $ respectively. We further assume that gcd$(\alpha,\beta) $ is square-free since $ E(\alpha,\beta) \simeq E(d^2 \alpha, d^2\beta)$ by replacing $y$ with $y/d^3$ and $x$ with $x/d^2.$ Let $E^d$ be a quadratic twist of $E$ such that $E^d: y^2=x(x+d\alpha)(x+d\beta)$ where $d \in K$ is a non-square. Note that $E^d(K)[2] \simeq E(K)[2]. $  This section is devoted to determining the list of all possible torsion groups that can occur as  $E^d(K)_{\text{tor}}$.

Before proceeding further, we need a technical lemma which is a variant of \cite[Proposition 14]{New17}. We prove without using Gr{\"o}bner basis so that the proof is reversible to recover examples of elliptic curves with certain torsion structure.

\begin{lem}\label{lem3}
	Given $K=\Q(\sqrt{D}) \in \mathcal{T}$ and a non-square $d \in K,$ let $f(D)$ count the number of tuples $(a,b,a_0,b_0,c_0)$ where $a,b,a_0,b_0,c_0 \in \mathcal{O}_K$ are non-zero, $\frac{a}{b}, \frac{a_0}{b_0} \notin \{-2,-1,-\frac{1}{2},0,1\}$ and the following system of equations holds:
	 \begin{align} \label{eq:sys10}
	\begin{cases} 
	da^3(a+2b)=a_0^3(a_0 + 2b_0)c_0^2 \\
	db^3(b+2a)=b_0^3(b_0 + 2a_0)c_0^2.  
	\end{cases} 
	\end{align}
If $D=-1,-2,-3,$ then $f(D)$ is zero.
\end{lem}

\begin{rem}
Note that $f(D)$ is either $0$ or $infinite.$ If there is one solution, then one can get infinitely many simply by rescaling. 
\end{rem}

\begin{proof}
	Suppose $f(D)$ is non-zero. Our goal is to show that $D\neq-1,-2,-3.$ Let the system of equations \eqref{eq:sys10} hold for some $a,b,a_0,b_0,c_0$ in $\mathcal{O}_K$ satisfying the hypothesis. Since $b,b_0,a+2b, a_0 + 2b_0 \neq 0,$ dividing the equations yields
	\begin{align*}
	\left(\frac{a}{b}\right)^3 \frac{a+2b}{b+2a} = \left(\frac{a_0}{b_0}\right)^3 \frac{a_0 + 2b_0}{b_0 + 2a_0} 
	\end{align*} 	
	Substituting $x:=\frac{a_0}{b_0}$ and $y:=\frac{a}{b}$ we get
	$ y^3\frac{y+2}{2y+1} = x^3 \frac{x+2}{2x+1} $ which is equivalent to 
	$$ x^4 + 2x^3+ 2yx^4 + 4yx^3 - y^4 -2y^3 -2xy^4-4xy^3 =0 $$
	The polynomial on the left hand side is factored into irreducible polynomials as follows:
	\begin{align}\label{product}
	(x-y)(2x^3y+2xy^3+x^3+y^3+5x^2y+5xy^2+2x^2+2y^2+2x^2y^2+2xy)
	\end{align}

	If $x=y$ then $a=kb$ and $a_0=kb_0$ for some $k \in K.$ Plugging this in the system \eqref{eq:sys10} implies $d$ is a square, a contradiction. It follows that there exists a $K$-rational point $(x,y$) with $x,y \notin \{-2,-1,-\frac{1}{2},0,1\}$ lying on the curve $C$ defined by $p_C(x,y)=0$  where $p_C(x,y)$ denotes the second factor of \eqref{product}. Write $\overline{C}$ for the projective closure of $C$. By Magma, $\overline{C}$ is birationally equivalent (over $\mathbb{Q}$) to the elliptic curve $E_C$ whose Weierstrass model is given by 
	$$ y^2+2xy+2y = x^3 - x^2 - 2x$$
	with the following map
	\begin{align*}
	\phi : \overline{C} &\longrightarrow E_C \\
	   [x,y,z] &\longmapsto [p_1^{\phi}, p_2^{\phi},p_3^{\phi} ] 
	\end{align*}
	where 
	\begin{align*}
	 p_1^{\phi}&=2x^2y^2 + 3x^2yz + 4xy^2z -y^3z + x^2z^2 + 6xyz^2 -y^2z^2 + 2xz^3 \\
	p_2^{\phi} &= 2x^2y^2 + 4xy^3 - x^2yz + 10xy^2z + 3y^3z - x^2z^2 + 7y^2z^2 - 2xz^3 + 2yz^3 \\
	 p_3^{\phi} &= y^4 + 3y^3z + 3y^2z^2 + yz^3.
	\end{align*}
	
	Since  $\overline{C}(K)$ maps to $E_C(K)$, the only potential points in $\overline{C}(K)$ belong to the union of $\phi^{-1}(E_C(K))$ and the set of non-regular points of $\phi.$ So $\overline{C}(K)$ and $E_C(K)$ differ by at most finitely many points.

 Suppose $p_3^{\phi}(x,y,z)=0.$ If $z=0$ then $y=0$ and if $z=1$ then $y=0$ or $-1.$ Thus a non-regular point can be in the form $[x,0,0],[x,0,1]$ or $[x,-1,1]$ for $x \in K.$ Setting $p_2^{\phi}(x,y,z) =0$ narrows down the set of potential non-regular points of $\phi$ to $\{[1,0,0],[0,0,1],[-2,0,1],[-1,-1,1]\}.$ But none of those coming from $C$ gives rise to a point $(x,y)$  satisfying that $x,y$ are non-zero with $x,y \notin \{-2,-1,-\frac{1}{2},0,1\}.$

 To determine $\phi^{-1}(E_C(K))$, we compute by Sage that $E_C(K)$ has rank 0 for $D=-1,-2,-3$ and rank 1 for $D=-7,-11,-19,-43,-67,-163$ as well as their generators (see Table \ref{tabletable}). Hence, $\overline{C}(K)$ has infinitely many points so does $C(K)$ for $D\neq -1,-2,-3.$ It remains to describe $\phi^{-1}(E_C(K))$ explicitly for $D=-1,-2,-3.$

 	\begin{table}[h!]
 		\caption{K-rational points on $E_C$ }
 		\label{tabletable}
 		\centering		
 		\begin{tabular}{| p{1.9cm} | p{2.3cm} | p{12.1cm} |}
 					\hline 
 					$K=\mathbb{Q}(w)$ & $E_C(K)$ &  Generators of $E_C(K)$ (in projective coordinates) \\ 
 					\hline
 					$\mathbb{Q}(\sqrt{-1})$  &  $\mathbb{Z}/6\mathbb{Z}$ & $[2,-6,1]$\\ 
 					\hline
 					$\mathbb{Q}(\sqrt{-2})$ &  $\mathbb{Z}/6\mathbb{Z}$ &  $[2,-6,1]$\\
 					\hline
 					$\mathbb{Q}(\sqrt{-3})$ & $\mathbb{Z}/6\mathbb{Z} \oplus \mathbb{Z}/2\mathbb{Z}$ & $[2,-6,1],[1/2(-w+1),1/2(w-3),1]$\\
 					\hline
 					$\mathbb{Q}(\sqrt{-7})$  & $\mathbb{Z}/6\mathbb{Z} \oplus \mathbb{Z}$ &  $[2,-6,1],[-2,-w+1,1]$\\
 					\hline
 					$\mathbb{Q}(\sqrt{-11})$ &  $\mathbb{Z}/6\mathbb{Z} \oplus \mathbb{Z}$    & $[2,-6,1],[1/25(w - 17), 1/125(-9w - 147),1]$ \\
 					\hline
 					$\mathbb{Q}(\sqrt{-19})$ & $\mathbb{Z}/6\mathbb{Z} \oplus \mathbb{Z}$   & $[2,-6,1],[-25/9,1/27(28w+48),1]$      \\ \hline
 					$\mathbb{Q}(\sqrt{-43})$ &               $\mathbb{Z}/6\mathbb{Z} \oplus \mathbb{Z}$ & $[2,-6,1],[-15289/13689, 1/1601613(153160w + 187200),1]$
 					\\
 					\hline
 					$\mathbb{Q}(\sqrt{-67})$ &  $\mathbb{Z}/6\mathbb{Z} \oplus \mathbb{Z} $   & $[2,-6,1],$
 					
 					$[-2398489/2350089,1/3602686437(110531740w +74197200),1]$ \\
 					\hline
 					$\mathbb{Q}(\sqrt{-163})$ &  $\mathbb{Z}/6\mathbb{Z} \oplus \mathbb{Z}$    & $[2,-6,1], [-4220790466489/4220078644089,1/8669235817215083187$
 					
 					$(15276006029948680w +1462284655339200),1]$ \\
 		 \hline
 			\end{tabular} 
 \end{table}
 
 In order to compute $\phi^{-1}(E_C(K))$, we use the birational inverse $\psi$ of $\phi$ where
	\begin{align*}
	\psi : E_C &\longrightarrow \overline{C} \\ 
  [x,y,z] &\longmapsto [p_1^{\psi}, p_2^{\psi},p_3^{\psi}]
   \end{align*}
   such that
	\begin{align*}
	 p_1^{\psi} &=x^4 + 2x^3y - 4xy^2z - 2y^3z - 3x^2z^2 - y^2z^2 - 2xz^3 + 2yz^3 \\
	p_2^{\psi} &= -x^4 - 8x^3z - 4x^2yz - 21x^2z^2 - 14xyz^2 - 3y^2z^2 - 22xz^3 -10yz^3 - 8z^4  \\
	p_3^{\psi} &= x^4 + 6x^3z + 2x^2yz + 9x^2z^2 - 2xyz^2 - 3y^2z^2 + 4xz^3 -4yz^3. 
	\end{align*}
	
A quick computation in Magma shows that if $D=-1,-2,-3$ then each point of $\phi^{-1}(E_C(K))$ either does not correspond to a point $(x,y)$ on $C$ or fails to satisfy $x,y \notin \{-2,-1,-\frac{1}{2},0,1\}$ for which one can see Table \ref{table7}. Therefore, $C(K)$ has no points $(x,y)$ satisfying the hypothesis for $D=-1,-2,-3,$ completing the proof.
	
	\begin{table}[h!]
		\caption{Determination of $\phi^{-1}(E_C(K))$} \label{table7}
		\centering		
		\begin{tabular}{| p{7.4cm} | p{9.8cm}| }
			\hline 
			$P \in E_C(K)$ & $\phi^{-1}(P)$  \\ 
			\hline
			$[0,1,0]$  &  $[1,-1,1]$ \\ 
			\hline
			
			$[-1,0,1]$ &  $[-1,1,1]$  \\
			\hline
			
			$[0,-2,1]$ & $[-2,0,1]$   \\
			\hline
			
			$[0,0,1]$  & $[0,1,0]$ \\
			\hline
			
			$[2,-6,1]$ & $ [1,0,0]$    \\
			\hline
			
			$[2,0,1]$ & $[0,-2,1]$      \\    
			\hline
			
			[$-\sqrt{-3}-1$,$\sqrt{-3}-3$,1] & $[\sqrt{-3}-1,2,0]$ 
			 \\
			\hline
			
			[$\sqrt{-3}-1$,$-\sqrt{-3}-3$,1] &  $[-\sqrt{-3}-1,2,0]$ 
			\\
			\hline
			
			$[-\sqrt{-3}-1,\sqrt{-3}+3,1]$  &  $[0,0,1] $   \\
			\hline
			
			[$\sqrt{-3}-1$,$-\sqrt{-3}+3$,1]  & $[0,0,1]$      \\
			\hline
			
			[$\frac{1}{2}(-\sqrt{-3}+1),\frac{1}{2}$($\sqrt{-3}-3),1]$ & $[-1,-1,1]$   \\
			\hline
			
			[$\frac{1}{2}(\sqrt{-3}+1),\frac{1}{2}(-\sqrt{-3}-3),1]$ & $[-1,-1,1]$       \\
			\hline
			
			[-2,$-\sqrt{-7}+1,1]$  & $[\frac{1}{8}(-3\sqrt{-7}-1),\frac{1}{4}(\sqrt{-7}+3),1]$   \\
			\hline
			
			$[\frac{1}{25}(\sqrt{-11}-17),\frac{1}{125}(-9\sqrt{-11}-147),1]$ & $[\frac{1}{24}(-\sqrt{-11}-43),\frac{1}{24}(19-\sqrt{-11}),1]$  \\
			\hline
			
			$[-\frac{25}{9}, \frac{1}{27}(28\sqrt{-19} + 48),1]$ &  $[\frac{1}{21457}(4536\sqrt{-19} + 8335), 
			 \frac{1}{12475}(-2688\sqrt{-19} + 4283),1]$     \\
			 
			\hline
			 $[-\frac{15289}{13689}, \frac{1}{1601613}(153160\sqrt{-43} + 187200),1] $       &$[\frac{1}{24095171196721}(1471818282480\sqrt{-43}-22077784435121),$
			         
			         $\frac{1}{8634098306107}(-57343104000\sqrt{-43} + 8625906306107), 1]$ \\		
			\hline
		\end{tabular} 
	\end{table}
\end{proof}	

\begin{rem}
	By using the additional data provided in Table \ref{table7}, we can work backwards in the proof of Lemma \ref{lem3} to produce examples of elliptic curves with certain torsion structure for $D=-7,-11,-19$. As the coefficients become overwhelmingly long for $D=-43,-67,-163$, an explicit example is not included in Table \ref{tab4}.
	\end{rem}

\begin{thm}\label{mainthm1}
	Given $K$ in $\mathcal{S},$ let $G \in \Phi_K(1)$ such that $\mathbb{Z}/2\mathbb{Z} \oplus \mathbb{Z}/2\mathbb{Z} \subseteq G.$
	
	\begin{enumerate}
		
		\item If $G \simeq \mathbb{Z}/2\mathbb{Z} \oplus \mathbb{Z}/{12}\mathbb{Z},$  then $T_K(G)=\{\mathbb{Z}/2\mathbb{Z} \oplus \mathbb{Z}/{2}\mathbb{Z}\}.$

		\item If $G \simeq \mathbb{Z}/2\mathbb{Z} \oplus \mathbb{Z}/{10}\mathbb{Z},$  
		then $T_K(G)=\{\mathbb{Z}/2\mathbb{Z} \oplus \mathbb{Z}/2\mathbb{Z}\}.$ 
		
		\item If $G \simeq \mathbb{Z}/2\mathbb{Z} \oplus \mathbb{Z}/8\mathbb{Z}$ when \begin{enumerate}[i.]
			\item $D \neq -7,$ then $T_K(G)=\{\mathbb{Z}/2\mathbb{Z} \oplus \mathbb{Z}/2\mathbb{Z}\}.$ 
			\item $D=-7,$ then $T_K(G)=\{\mathbb{Z}/2\mathbb{Z} \oplus \mathbb{Z}/{2}\mathbb{Z}, \mathbb{Z}/2\mathbb{Z} \oplus \mathbb{Z}/4\mathbb{Z} \}.$
		\end{enumerate}	
		\item	If $G \simeq \mathbb{Z}/2\mathbb{Z} \oplus \mathbb{Z}/6\mathbb{Z}$ when
		
		\begin{enumerate}[i.]
			\item  $D=-2,$ then $T_K(G)=\{\mathbb{Z}/2\mathbb{Z} \oplus \mathbb{Z}/2\mathbb{Z}\}.$
			\item $D=-7,-11,$ then
			$T_K(G)=\{\mathbb{Z}/2\mathbb{Z} \oplus \mathbb{Z}/{2}\mathbb{Z}, \mathbb{Z}/2\mathbb{Z} \oplus \mathbb{Z}/6\mathbb{Z}\}.$
			\item $D=-19,-43,-67,-163,$ then $T_K(G)=\{\mathbb{Z}/2\mathbb{Z} \oplus \mathbb{Z}/{2n}\mathbb{Z} : n=1,2,3 \}.$
			\end{enumerate}
		
		\item If $G \simeq \mathbb{Z}/2\mathbb{Z} \oplus \mathbb{Z}/4\mathbb{Z}$ when 
		\begin{enumerate}[i.]
			\item	$D=-2,-11,$  then $T_K(G)=\{\mathbb{Z}/2\mathbb{Z} \oplus \mathbb{Z}/2\mathbb{Z},\mathbb{Z}/2\mathbb{Z} \oplus \mathbb{Z}/4\mathbb{Z}\}.$ 		
				\item $D=-7,$ then $T_K(G)=\{\mathbb{Z}/2\mathbb{Z} \oplus \mathbb{Z}/2\mathbb{Z}, \mathbb{Z}/2\mathbb{Z} \oplus \mathbb{Z}/4\mathbb{Z},\mathbb{Z}/2\mathbb{Z} \oplus \mathbb{Z}/8\mathbb{Z}\}.$ 
				\item $D=-19,-43,-67,-163,$ then $T_K(G)=\{\mathbb{Z}/2\mathbb{Z} \oplus \mathbb{Z}/{2n}\mathbb{Z}: n=1,2,3\}.$
			\end{enumerate}
		
		\item If $G \simeq \mathbb{Z}/2\mathbb{Z} \oplus \mathbb{Z}/2\mathbb{Z}$ then $T_K(G)\subseteq \{\mathbb{Z}/2\mathbb{Z} \oplus \mathbb{Z}/{2n}\mathbb{Z} : n=1,2,3,4,5,6\}.$  
	\end{enumerate}
\end{thm}

\begin{proof} 
Let $E$ denote an arbitrary elliptic curve over $K$ with $E(K)_{\text{tor}} \simeq G.$ Then $E$ has a model of the form  $y^2=x(x+\alpha)(x+\beta)$ for some $\alpha,\beta \in \mathcal{O}_K.$ Let $E^d$ denote the quadratic twist of $E$ by $d\in K$ a non-square. 
We already know from Theorem \ref{mainlist} that $T_K(G)$ must be a subset of $\{\mathbb{Z}/2\mathbb{Z} \oplus \mathbb{Z}/{2n}\mathbb{Z} \ : \ 1 \leq n \leq 6\}.$ 
\vspace{.2cm}

$\textit{(1)}$ Suppose $E(K)_{\text{tor}} \simeq \mathbb{Z}/2\mathbb{Z} \oplus \mathbb{Z}/{12}\mathbb{Z}.$ Note that $\mathbb{Z}/3\mathbb{Z} \nsubseteq E^d(K)_{\text{tor}}$ as $\mathbb{Z}/3\mathbb{Z} \oplus \mathbb{Z}/12\mathbb{Z}$ cannot be realized over quartic fields by \cite[Theorem 8]{BN16}.  Also, $\mathbb{Z}/5\mathbb{Z} \not\subseteq E^d(K)_{\text{tor}},$ otherwise together with a $K$-rational $12$-torsion point, $E$ would have a $K$-rational cyclic 60-isogeny pointwise defined over $K(\sqrt{d})$ from Proposition \ref{mn}.  Equivalently, the modular curve $X_0(60)$ has a non-cuspidal $K$-rational point, which is not possible by \cite[9.1]{BrN}. Moreover, $\mathbb{Z}/{8}\mathbb{Z} \not\subseteq E^d(K)_{\text{tor}}.$ Otherwise, replacing $E$ by $E^d$ in Proposition \ref{mn} if necessary, we may assume that $E$ has two independent $K$-rational cyclic isogenies of order $2$ and $24$. By \cite[Lemma 7]{Naj16}, $E$ is $K$-isogenous to an elliptic curve $E'/K$ with a cyclic $48$-isogeny, contradicting Theorem \ref{isogeny}.

It remains to show that $E^d(K)$ has no point of order $4$. By way of contradiction, let $E^d(K)\simeq \mathbb{Z}/2\mathbb{Z} \oplus \mathbb{Z}/4\mathbb{Z}.$
There exists a short exact sequence 
\begin{align}\label{exact1}
	0 \rightarrow \text{ker}(\psi)\xrightarrow{i} E(K(\sqrt{d})) \xrightarrow{\psi} E(K) \times E^d(K) \xrightarrow{\pi} \text{coker}(\psi) \rightarrow 0 
\end{align}
where $\psi$ maps $R = (x,y)$ to  
$$\psi(R) = (R+\sigma(R), \phi(R-\sigma(R),1/\sqrt{d}))\ \text{with}\ \phi(R,a)=(x,ay).$$ 

Note that ker$(\psi)=\mathbb{Z}/2\mathbb{Z} \oplus \mathbb{Z}/2\mathbb{Z}$ since Gal$(K(\sqrt{d})/K)$ fixes a basis of $E[2]$. Restricting $\psi$ to the torsion part and applying \cite[Theorem 8]{BN16} imply that $E(K(\sqrt{d}))_{\text{tor}} \simeq \mathbb{Z}/2\mathbb{Z} \oplus \mathbb{Z}/12\mathbb{Z}$ or $\mathbb{Z}/2\mathbb{Z} \oplus \mathbb{Z}/24\mathbb{Z}.$ In both cases, coker$(\psi)$ must contain a point of order $4$ which gives a contradiction due to \cite[Theorem 3]{GJT14}.
\vspace{.2cm}

$\textit{(2)}$ Suppose $E(K)_{\text{tor}}\simeq \mathbb{Z}/2\mathbb{Z} \oplus \mathbb{Z}/{10}\mathbb{Z}.$ Since  $T_K(G) \subseteq \{\mathbb{Z}/2\mathbb{Z} \oplus \mathbb{Z}/{2n}\mathbb{Z} \ : \ 1 \leq n \leq 6\}$, we will eliminate the non-trivial cases for $n.$

\vspace{.1cm}

	$n \in \{3,6\}$:
	In both cases, $E^d(K)_{\text{tor}}$ has a point of order $3$. By Proposition \ref{mn}, $E(K(\sqrt{d}))$ has two independent Gal$(\overline{K}/K)$-invariant cyclic subgroups of order $2$ and $30.$ It follows from \cite[Lemma 7]{Naj16} that $E$ is $K$-isogenous to an elliptic curve with a cyclic $60$-isogeny. Equivalently, the modular curve $X_0(60)$ has a non-cuspidal $K$-rational point which is not possible by \cite[9.1]{BrN}.	
\vspace{.1cm}

	$n \in\{2,4\}$:
	In both cases, $E^d(K)_{\text{tor}}$ has a point of order $4$. By Proposition \ref{mn}, $E(K(\sqrt{d}))$ has two independent Gal$(\overline{K}/K)$-invariant cyclic subgroups of order $2$ and $20.$ By \cite[Lemma 7]{Naj16} $E$ is isogenous over $K$ to an elliptic curve $E'/K$ with a cyclic $40$-isogeny which in turn implies that $X_0(40)(K)$ has a non-cuspidal point, a contradiction by Theorem \ref{isogeny}.
\vspace{.1cm}
	
	$n = 5$:
	In this case, $E^d(K)_{\text{tor}}$ has a point of order $5$. By Lemma \ref{lem2}, $E[5]$ is contained in $E(K(\sqrt{d}))$  which implies by the Weil pairing that $K(\sqrt{d})=\mathbb{Q}(\zeta_{5})$ where $\zeta_5$ is a primitive $5$th root of unity. Then  Gal$(\mathbb{Q}(\zeta_{5})/\mathbb{Q})$ is isomorphic to $\mathbb{Z}/{4}\mathbb{Z}$ and so every intermediate field of $\mathbb{Q}(\zeta_{5})$ is totally real, a contradiction. 
\vspace{.2cm}

$\textit{(3)}$	Suppose $E(K)_{\text{tor}} \simeq \mathbb{Z}/2\mathbb{Z} \oplus \mathbb{Z}/8\mathbb{Z}$. Without loss of generality, we assume that there exists a triple $(x,y,z)\in \mathcal{O}_K$ such that $\alpha = x^4,$ $\beta=y^4$ and $x^2+y^2=z^2.$ If $E^d(K)$ has a point of order $8,$ then by the exact sequence \eqref{exact1},  $E(K(\sqrt{d}))_{\text{tor}}/\text{ker}(\psi)$ injects into $E(K)_{\text{tor}} \times E^d(K)_{\text{tor}}$ where ker$(\psi)= \mathbb{Z}/2\mathbb{Z} \oplus \mathbb{Z}/2\mathbb{Z}.$ Using \cite[Theorem 8]{BN16} reduces the possibility for $E(K(\sqrt{d}))_{\text{tor}}$ to $\mathbb{Z}/4\mathbb{Z} \oplus \mathbb{Z}/8\mathbb{Z}$ and $\mathbb{Z}/2\mathbb{Z} \oplus \mathbb{Z}/16\mathbb{Z}.$ In each case, coker$(\psi)$ contains at least one point of order $4$, which yields a contradiction due to \cite[Theorem 3]{GJT14}.

If $E^d(K)$ has a point of order $5$, then together with a $K$-rational $8$-torsion point on $E$ and by Proposition \ref{mn}, there exists a $K$-rational cyclic $40$-isogeny of $E$ pointwise defined over $K(\sqrt{d}).$ But this cannot happen by Theorem \ref{isogeny}.

If $E^{d}(K)_{\text{tor}}$ has a point of order $4$, then  by Theorem \ref{newman} one of the following is satisfied:
\begin{enumerate}[(i)]

		\item $dx^4,$ $dy^4$ are squares, 
		\item $-dx^4,$ $dy^4-dx^4$ are squares,
		\item $-dy^4,$ $dx^4-dy^4$ are squares.
\end{enumerate}
In case (i), $d$ must be a square, a contradiction. In case (ii) or (iii), it follows that $d=-u^2$ for some $u \in \mathcal{O}_K.$ Since $dy^4-dx^4=\pm v^2$ for some $v \in \mathcal{O}_K,$ then dividing by $d$ yields
	$$ y^4-x^4=\mp (vu^{-1})^2$$
and multiplying by $-1$ if necessary, we get $x^4-y^4=w^2$ for some $w \in \mathcal{O}_K.$ Note that $x^4,y^4,x^4-y^4 \neq 0$ since $E$ is smooth and so $x,y,w \neq 0$. By \cite[Proposition 11]{New17}, there is no nontrivial solution to $x^4-y^4=w^2$ except $D=-7$ (see Table \ref{table1}).

	If $E^d(K)_{\text{tor}}$ has a point of order $3$, then by Proposition \ref{mn} the subgroups $\mathbb{Z}/2\mathbb{Z}$ and $\mathbb{Z}/{24}\mathbb{Z}$ of $E(K(\sqrt{d}))$ form two independent $K$-rational isogenies of $E.$ It follows from  \cite[Lemma 7]{Naj16} that $E$ is $K$-isogenous to an elliptic curve $E'/K$ with a cyclic $48$-isogeny, contradicting Theorem \ref{isogeny}. 

\vspace{.2cm}

$\textit{(4)}$	Suppose $E(K)_{\text{tor}} \simeq \mathbb{Z}/2\mathbb{Z} \oplus \mathbb{Z}/6\mathbb{Z}$. We assume that there exists $a, b, c \in \mathcal{O}_{K}$ with $c \neq 0$ and $\frac{a}{b} \not\in  \{-2,-1,-\frac{1}{2},0,1\}$ such that $\alpha = a^3(a + 2b)c^2$ and $\beta = b^3(b + 2a)c^2$. By choosing $E^{c^2}$ instead of $E$ if necessary, we further assume $\alpha=a^3(a + 2b)$ and $\beta=b^3(b + 2a).$  Note that $E^d(K)$ cannot have a point of order $8,10,$ and $12$ by the proof of the previous assertions.	It remains to argue whether $E^d(K)$ has a point of order $3$ and $4$. Suppose  $E^d(K)\simeq \mathbb{Z}/2\mathbb{Z} \oplus \mathbb{Z}/6\mathbb{Z}.$ Then we have $d=-3$ by the Weil pairing. On the other hand, there are $a_0,b_0,c_0 \in \mathcal{O}_{K}$ with $c_0 \neq 0$ and $\frac{a_0}{b_0} \notin \{-2,-1,-\frac{1}{2},0,1\}$ such that 	

$$ -3a^3(a+2b)= a_0^3(a_0 + 2b_0)c_0^2 \ \ \  -3b^3(b+2a)= b_0^3(b_0 +2a_0)c_0^2. $$

As $-3$ is a non-square in $K,$ it follows from Lemma \ref{lem3} that  $D\neq-2.$ One can see Table \ref{tab4} for examples.  If $ E^d(K)_{\text{tor}} \simeq  \mathbb{Z}/2\mathbb{Z} \oplus \mathbb{Z}/4\mathbb{Z},$ then by Theorem \ref{newman} one of the following holds:
\vspace{.1cm}

		 (i) $d\alpha, d\beta$ are squares, 
		 (ii) $-d\alpha, d\beta-d\alpha$ are squares, or
		 (iii) $-d\beta,d\alpha-d\beta$ are squares. 

With no loss of generality, we assume (i) holds, i.e. $d\alpha = s^2,d\beta = t^2$ for some $s,t \in \mathcal{O}_K$. Note that $s,t \neq  0,s^2 \neq  t^2.$ We have
	$$ s^2 = d\alpha = da^3(a + 2b) \  \  \ t^2 = d\beta = db^3(b + 2a).
	$$
	Multiplying by $d$ we get
	$$ ds^2 = a^3(a + 2b)d^2 \ \ \ dt^2 = b^3(b + 2a)d^2.
	$$
	Taking the products and dividing both sides by $d^4a^2b^6$ along with substituting $z:=\frac{a}{b}$ yield
	$$
	\left(\frac{st}{dab^3}\right)^2 = z(z+2)(2z+1)
	$$
	Multiplying by $4$ gives
	$
	\left(\frac{2st}{dab^3}\right)^2 = 2z(2z+4)(2z+1).
	$ So that we obtain a $K$-rational point on the elliptic curve $E_0$ defined by 
	$$ 
	y^2 = x(x+4)(x+1)=x^3+5x^2+4x.
	$$
By Sage,
$E_0(K) \simeq \mathbb{Z}/2\mathbb{Z} \oplus \mathbb{Z}/4\mathbb{Z} =\{[0,1,0], [0,0,1], [-4,0,1], [-1,0,1], [-2,\pm 2,1],[2,\pm 6,1]\}$
when $D=-2,-7,-11.$ Then, $2z=-4,-2,-1,0$ or $2$ and so $z \in \{-2,-1,-\frac{1}{2},0,1\}$, a contradiction. On the other hand, $E_0(K)\simeq \mathbb{Z}/2\mathbb{Z} \oplus \mathbb{Z}/4\mathbb{Z} \oplus \mathbb{Z}$ when $D=-19,-43,-67,-163.$ For example, the extra points in $E_0(K)$ for $D=-19,-43,-67$ give rise to the torsion growth as shown in Table \ref{tab3}, establishing the assertion.
\vspace{.2cm}

$\textit{(5)}$	Suppose $E(K)_{\text{tor}} \simeq \mathbb{Z}/2\mathbb{Z} \oplus \mathbb{Z}/4\mathbb{Z}$. Replacing $E$ by $E^d$ in the proof of $\textit{(1)}$ above, we see that $E^{d}$ has no point of order $12.$ Similarly, the possibility of $E^d(K)_{\text{tor}}$ having a $10$-torsion point is dismissed by case $n=5$ in the proof of $\textit{(2)}.$

Notice that $E^d$ has no point of order $8$ except $D=-7$ by the proof of $\textit{(3)}.$ One can take $E$ as $E^{-1}(729,2304)$ in Table \ref{table1} for $D=-7.$ One can also see Table \ref{tab5} where $E^d(K)$ has a $4$-torsion point for any $K \in \mathcal{S}.$ By replacing $E$ by $E^d$ in the proof of $\textit{(4)}$ above, $E^d(K)$ can have a $3$-torsion point only when $D=-19,-43,-67,-163.$ For example, one can consider $E$ as $E(-\frac{\alpha}{3}, -\frac{\beta}{3})$ in Table \ref{tab3}. 
 \vspace{.2cm}
 
$\textit{(6)}$ It follows from \eqref{eq:kamiennygroups} and $E(K)[2] \simeq E^d(K)[2]$ for any $d\in K.$ 
	
\end{proof}

\section{Growth of Torsion}

Let $K$ be a number field and let $F$ denote the maximal elementary abelian $2$-extension of $K$ where
\begin{align} \label{ffff}
	F=K(\{\sqrt{d} : d\in \mathcal{O}_K\}). 
\end{align}

We need the following technical result prior to the proof of the main theorem (the result is in fact true for any number field $K$, after minor modifications of the proof).

\begin{lem}\label{lem7}
	Let $K$ be in $\mathcal{S}$  and $\alpha \in K.$ If $\sqrt{\alpha}$ (or $-\sqrt{\alpha})$ is a square in $F$ then $\alpha$ or $-\alpha$ is a square in $K.$ 
\end{lem}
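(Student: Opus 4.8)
The plan is to reduce the statement to a one-line Galois norm computation, so the structure is a short case split followed by applying $N_{L/K}$. First I would unwind the hypothesis. Saying ``$\sqrt{\alpha}$ (or $-\sqrt{\alpha}$) is a square in $L$'' means that one of the two square roots of $\alpha$ equals $\delta^2$ for some $\delta \in L$; call that root $\gamma$, so $\gamma = \delta^2 \in L$ and $\gamma^2 = \alpha$. In particular $\gamma$ itself lies in $L$, and the two sign variants are handled at once since $\gamma \in \{\sqrt{\alpha}, -\sqrt{\alpha}\}$ in either case. The only fact being recorded at this stage is $\gamma \in L$ with $\gamma^2 = \alpha$.

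Next I would split into two cases according to where $\gamma$ sits relative to $K$. If $\gamma \in K$, then $\alpha = \gamma^2$ is immediately a square in $K$ and we are done. Otherwise $\gamma \notin K$, and since $[L:K]=2$ the intermediate field $K(\gamma)$ must equal $L$; let $\sigma$ generate $\mathrm{Gal}(L/K)$. Because $\gamma^2 = \alpha \in K$, the element $\gamma$ is a root of $x^2 - \alpha \in K[x]$, so the nontrivial automorphism sends $\gamma \mapsto -\gamma$, i.e. $\sigma(\gamma) = -\gamma$.

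The key step is then to apply the norm to $\delta$. Put $\eta := N_{L/K}(\delta) = \delta\,\sigma(\delta) \in K$. Applying $\sigma$ to $\delta^2 = \gamma$ gives $\sigma(\delta)^2 = \sigma(\gamma) = -\gamma$, whence $\eta^2 = \delta^2\,\sigma(\delta)^2 = \gamma(-\gamma) = -\gamma^2 = -\alpha$. Thus $-\alpha$ is a square in $K$, completing the proof.

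I do not expect a genuine obstacle here: the only things to get right are the case distinction (whether $\gamma$ descends to $K$ or generates $L$) and the observation that the $L/K$-norm of $\delta$ is precisely the element of $K$ whose square is $-\alpha$. It is worth flagging that this argument uses nothing about $K$ beyond $\mathrm{char}\,K \neq 2$ and $[L:K]=2$; in particular neither $K \in \mathcal{S}$ nor class number $1$ seems to enter. I would therefore double-check that I have not overlooked a tacit hypothesis (for instance that $\sqrt{\alpha}\notin K$, or that $L$ is a prescribed extension) before concluding that those assumptions are genuinely superfluous for this lemma.
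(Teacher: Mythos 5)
Your proof is correct, and it takes a genuinely different route from the paper's. The paper works with the quartic $f(x)=x^4-\alpha$, of which your $\delta$ is a root: it first rules out irreducibility of $f$ over $K$ (since $\delta$ lies in the quadratic extension $L$), then writes $f(x)=(x^2+ax+b)(x^2+cx+d)$ over $K$ and compares coefficients, landing in two cases $c=0$ (giving $\alpha=b^2$) and $b=d$ (giving $-\alpha=b^2$). Your argument compresses this: the constant term of the quadratic factor of $f$ having $\delta$ as a root is exactly $\delta\,\sigma(\delta)=N_{L/K}(\delta)$, so your identity $N_{L/K}(\delta)^2=\gamma\,\sigma(\gamma)=-\alpha$ is the paper's coefficient comparison with the scaffolding stripped away, and your dichotomy $\gamma\in K$ versus $\gamma\notin K$ matches the paper's two factorization patterns. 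What your version buys is twofold: you avoid the enumeration of factorization types (the paper dismisses a possible cubic factor with a slightly ad hoc remark about $3\nmid |\mathrm{Gal}(L/K)|$), and you avoid the irreducibility discussion, where the paper's claim that the splitting field of an irreducible $x^4-\alpha$ has degree $4$ over $K$ is not generally accurate (it is typically $8$, since one needs a fourth root of unity), though the intended contradiction survives because $\delta$ would then have degree $4$ over $K$ while lying in $L$. Your closing suspicion is also right and worth keeping: neither proof uses $K\in\mathcal{S}$ or class number $1$; the lemma holds for any separable quadratic extension in characteristic $\neq 2$, and the case $\sqrt{\alpha}\in K$ is genuinely allowed and is exactly your first case, so no tacit hypothesis is missing.
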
	

\begin{proof}
	Let $z \in F$ such that $\sqrt{\alpha}=z^2$ (or $\sqrt{\alpha}=-z^2).$ Then $z$ is a root of the polynomial $f(x)=x^4-\alpha$ defined over $K.$ Since $f$ has a root in $F$ and $F/K$ is Galois, $f(x)$ must split over $F.$ Let $K_f$ denote the splitting field of $f$ over $K.$ Notice that Gal$(K_f/K)$ is an elementary abelian $2$-group since Gal$(K_f/K)$ is a quotient of Gal$(F/K).$ 
	
	If $f(x)$ is not irreducible over $K$, then it is either a product of two quadratic polynomials or one quadratic with two linear polynomials. We will deal with both cases with the following argument. Let
	$$ f(x)=x^4-\alpha=(x^2+ax+b)(x^2+cx+d) $$
	for some $a,b,c,d \in K.$ Then $ac+b+d=0,$ $ad+bc=0$ and $a+c=0.$ Replacing $a$ by $-c$ gives $c(b-d)=0.$ So either $c=0$ or $b=d.$ If $c=0$ then $b=-d$ and so $bd=-\alpha$ which implies $\alpha=b^2$ in $K.$ If $b=d,$ then $\alpha=-bd,$ so $-\alpha=b^2$ in $K.$

	If $f(x)$ is irreducible over $K,$ then since an elementary abelian $2$-subgroup of $S_4$ is either of order $2$ or the Klein four-group $V_2,$ we have Gal$(K_f/K)\simeq V_2.$ Note that $i \not\in K.$ Then $f(x)$ is reducible over $K(i)$ since $K_f=K(z,i)$ and it has degree $4$ over $K.$ We know from the preceding paragraph that $\alpha$ or $-\alpha$ is a square in $K(i).$ If $\alpha=\pm d^2$ with $d \in K(i)$ then as $\alpha\in K,$ we have $d=bi$ for some $b\in K.$ Hence, $\alpha=\pm b^2$ as desired.     
\end{proof}

\subsection*{Proof of Theorem \ref{mainthm2}}

Given $K \in \mathcal{S}$ and $G \in \Phi_K(1)$ with $\mathbb{Z}/2\mathbb{Z} \oplus \mathbb{Z}/2\mathbb{Z} \subseteq G,$ we want to determine $\Phi_K(2,G).$ Let  $E/K$ be an arbitrary elliptic curve such that $E(K)_{\text{tor}}\simeq G$ and let $L$ be any quadratic extension of $K$ with \text{Gal}$(L/K)=\langle \sigma \rangle.$ 
\vspace{.2cm}

$\textit{(1)}$\  Suppose $E(K)_{\text{tor}} \simeq \mathbb{Z}/2\mathbb{Z} \oplus \mathbb{Z}/{12}\mathbb{Z}.$ It follows from Proposition \ref{mainprop1} and Theorem \ref{mainthm1} that  $E(L)_{\text{tor}}$ is isomorphic to one of the groups:
\begin{align*}
	\mathbb{Z}/2\mathbb{Z} \oplus \mathbb{Z}/12\mathbb{Z}, \ \mathbb{Z}/4\mathbb{Z} \oplus \mathbb{Z}/{12}\mathbb{Z},\ \mathbb{Z}/2\mathbb{Z} \oplus \mathbb{Z}/{24}\mathbb{Z}, \ \mathbb{Z}/4\mathbb{Z} \oplus \mathbb{Z}/{24}\mathbb{Z}.
\end{align*}
By \cite[Theorem 8]{BN16}, $\mathbb{Z}/4\mathbb{Z} \oplus \mathbb{Z}/{12}\mathbb{Z}$ cannot occur as a subgroup of elliptic curves over quartic number fields. So it remains to show that $E(L)_{\text{tor}}$ cannot be isomorphic to $\mathbb{Z}/2\mathbb{Z} \oplus \mathbb{Z}/24\mathbb{Z}.$

Suppose $E(L)_{\text{tor}} \simeq \mathbb{Z}/2\mathbb{Z} \oplus \mathbb{Z}/24\mathbb{Z}.$ Let $Q$ be a point of order 2 and $P$ a point of order 24 such that $E(L)_{\text{tor}} = \langle Q, P \rangle.$  Then every point $aP+bQ$ where $a$ is odd has order divisible by 8, and hence lies outside $E(K)_{\text{tor}}.$ So the 24 points $aP+bQ$ with even $a$ must all be in $E(K)_{\text{tor}}$ and hence fixed by $\sigma.$ In particular, this holds for $2P.$ Since $\sigma(P)=aP+bQ$ with $a \in (\mathbb{Z}/24\mathbb{Z})^{\times}$ we get 
$$ 
2P=\sigma(2P)=2aP+2bQ 
$$
from which it follows that $a=1$ or $13.$ If $b=0,$ then $E$ has a $K$-rational cyclic $24$-isogeny generated by $P.$ Applying \cite[Lemma 7]{Naj16} to the independent isogenies $\langle Q \rangle$ and $\langle P \rangle$ of $E$, we have that $E$ is $K$-isogenous to an elliptic curve $E'$ with a cyclic $48$-isogeny.
But this gives a contradiction to Theorem \ref{isogeny}. 
If $b=1,$ then $\sigma(P)=P+Q$ or $13P+Q.$ We consider the quotient  curve $\overline{E}:=E/ \langle Q \rangle,$ resp. $E/\langle 12P+Q \rangle$ with the isogeny $\phi: E \rightarrow \overline{E}.$ Writing $\overline{P}$ for the image of $P$ under $\phi$, we have $\sigma(\overline{P})=\overline{P}.$ So $\overline{E}$ has a $K$-rational $24$-torsion point $\overline{P},$ a contradiction by Theorem \ref{mainlist}.
\vspace{.2cm}

$\textit{(2)}$\	Suppose $E(K)_{\text{tor}} \simeq \mathbb{Z}/2\mathbb{Z} \oplus \mathbb{Z}/{10}\mathbb{Z}.$ By Proposition \ref{mainprop1} and Theorem \ref{mainthm1}, the only non-trivial groups that can occur as $E(L)_{\text{tor}}$ are 
	$ \mathbb{Z}/2\mathbb{Z} \oplus \mathbb{Z}/{20}\mathbb{Z}$ and $ \mathbb{Z}/4\mathbb{Z} \oplus \mathbb{Z}/{20}\mathbb{Z}.$ We will follow a similar argument as in the previous proof to exclude each group.

Suppose $E(L)_{\text{tor}} \simeq \mathbb{Z}/2\mathbb{Z} \oplus \mathbb{Z}/20\mathbb{Z}.$ Let $Q$ be a point of order 2 and $P$ a point of order 20 such that $E(L)_{\text{tor}} = \langle Q, P \rangle.$  Then every point $aP+bQ$ where $a$ is odd has order divisible by 4, and hence lies outside $E(K)_{\text{tor}}.$ So the 20 points $aP+bQ$ with even $a$ must all be in $E(K)_{\text{tor}}$ and hence fixed by $\sigma.$ In particular, this holds for $2P.$ Since $\sigma(P)=aP+bQ$ with $a \in (\mathbb{Z}/20\mathbb{Z})^{\times}$ we get 
$$ 
2P=\sigma(2P)=2aP+2bQ 
$$
from which it follows that $a=1$ or $11.$ If $b=0,$ then $E$ has a $K$-rational cyclic $20$-isogeny generated by $P.$ Together with the independent $K$-rational $2$-torsion point $Q,$ we can apply \cite[Lemma 7]{Naj16} from which it follows that $E$ is isogenous to an elliptic curve $E'/K$ with a cyclic $40$-isogeny. But this gives a contradiction to Theorem \ref{isogeny}. 
If $b=1,$ then $\sigma(P)=P+Q$ or $11P+Q.$ We consider the elliptic curve $\overline{E}:=E/ \langle Q \rangle$ resp. $E/\langle 10P+Q \rangle$ with the isogeny $\phi: E \rightarrow \overline{E}.$ Writing $\overline{P}$ for the image of $P$ under $\phi,$ we have $\sigma(\overline{P})=\overline{P}.$ So $\overline{E}$ has a $K$-rational $20$-torsion point $\overline{P}$, a contradiction by Theorem \ref{mainlist}.

With a similar argument, we will eliminate the possibility where $E(L)_{\text{tor}} \simeq \mathbb{Z}/4\mathbb{Z} \oplus \mathbb{Z}/20\mathbb{Z}.$ Suppose $E(L)_{\text{tor}} =\langle Q',P \rangle$ where $Q'$ is a point of order $4$ and $P$ is a point of order $20.$ Then every point $aP+bQ'$ with odd $a$ or odd $b,$ has order divisible by 4 and hence lies outside $E(K)_{\text{tor}}.$ So the 20 points $aP+bQ'$ where $a,b$ are both even, must all be in $E(K)_{\text{tor}}.$ In particular, $2P$ is $K$-rational. On the other hand, since $\sigma$ maps $P$ to a point of order $20,$ we get
$$2P=\sigma(2P) = 2aP+ 2bQ'$$
and hence $a=1,11$ and $b=0,2.$ If $b=0,$ then $E$ has a $K$-rational cyclic $20$-isogeny generated by $P.$ It follows from applying \cite[Lemma 7]{Naj16} to the independent isogenies $\langle 2Q' \rangle$ and $\langle P \rangle$ of $E$ that $E$ is isogenous to an elliptic curve $E'/K$ with a cyclic $40$-isogeny. But it contradicts Theorem \ref{isogeny}. 
If $b=2,$ then $\sigma(P)=P+2Q'$ or $11P+2Q'.$ We consider the quotient curve $\overline{E}:= E/\langle 2Q' \rangle $ resp. $E/ \langle 10P {+} 2Q' \rangle $ with the isogeny $\phi: E \rightarrow \overline{E}.$ Writing $\overline{P}$ for the image of $P$ under $\phi,$ we have  $\sigma(\overline{P})= \overline{P}.$ So $\overline{E}$ has a $K$-rational $20$-torsion point $\overline{P},$ a contradiction by Theorem \ref{mainlist}.
\vspace{.2cm}

$\textit{(3)}$ If $D=-7$ and $E(K)_{\text{tor}} \simeq \mathbb{Z}/2\mathbb{Z} \oplus \mathbb{Z}/8\mathbb{Z},$ then by Theorem \ref{mainthm1} and Proposition \ref{mainprop1},  $E(L)_{\text{tor}} \simeq \mathbb{Z}/m\mathbb{Z} \oplus \mathbb{Z}/n\mathbb{Z}$ where
	\begin{equation} \label{stuff2}
 m \in \{2,4\}, \  n \in \{8,16,32\} \ \ \text{or} \ \
	 m=8,\  n \in \{8,16\}.
	\end{equation}
 
	By Table \ref{table1}, the groups $\mathbb{Z}/4\mathbb{Z} \oplus \mathbb{Z}/8\mathbb{Z}, \mathbb{Z}/2\mathbb{Z} \oplus \mathbb{Z}/{16}\mathbb{Z}$ are realized as $E(L)_{\text{tor}}.$ Moreover, \cite[Theorem 8]{BN16} excludes the subgroups $\mathbb{Z}/8\mathbb{Z} \oplus \mathbb{Z}/8\mathbb{Z}, \mathbb{Z}/4\mathbb{Z} \oplus \mathbb{Z}/{16}\mathbb{Z},$ which further rules out 
	$\mathbb{Z}/8\mathbb{Z} \oplus \mathbb{Z}/16\mathbb{Z},$ 
	$\mathbb{Z}/4\mathbb{Z} \oplus \mathbb{Z}/{32}\mathbb{Z}.$ It remains to disprove the occurrence of $\mathbb{Z}/2\mathbb{Z} \oplus \mathbb{Z}/32\mathbb{Z}.$ For this purpose, we will prove that $E(L)[32] \not\simeq \mathbb{Z}/2\mathbb{Z} \oplus \mathbb{Z}/32\mathbb{Z}.$ By way of contradiction, we assume $E(L)[32]\simeq \mathbb{Z}/2\mathbb{Z} \oplus \mathbb{Z}/{32}\mathbb{Z}.$  Fix a point $Q$ of order 2 and a point $P$ of order 32 such that $E(L)[32]= \langle Q, P \rangle.$ Then $E(K)_{\text{tor}} = \langle Q, 4P \rangle.$ The non-trivial element $\sigma $ of Gal($L/K)$ fixes $Q$ and sends $P$ to  a point of order 32, which must be of the form $aP$ or $Q + aP$ where $a$ is odd. Applying $\sigma$ to itself once again, we see that
	$$ P = \sigma^2(P) = a^2P $$
	which implies $a^2 \equiv 1 (\text{mod}\ 32)$ so $a$ can only be 1, 15, 17 or 31.  For all eight possibilities of $\sigma(P),$ $\sigma$ maps $2P$ to either $2P$ or $-2P.$ If $\sigma(2P) = 2P$ then the point $2P$ of order 16 would be $K$-rational, contradicting the assumption. Also $\sigma(2P)=-2P$ cannot happen, otherwise $4P$ would be mapped to its inverse which gives a contradiction since $4P$ is $K$-rational.

Let $D\neq -7$ and $E(K)_{\text{tor}} \simeq \mathbb{Z}/2\mathbb{Z} \oplus \mathbb{Z}/8\mathbb{Z}.$ It follows from Proposition \ref{mainprop1}, Theorem \ref{mainthm1}, and \cite[Theorem 8]{BN16} that the non-trivial possibilities for $E(L)_{\text{tor}}$ are $\mathbb{Z}/4\mathbb{Z} \oplus \mathbb{Z}/8\mathbb{Z}$ and $ \mathbb{Z}/2\mathbb{Z} \oplus \mathbb{Z}/{16}\mathbb{Z}.$ If $E(L) \simeq \mathbb{Z}/4\mathbb{Z} \oplus \mathbb{Z}/8\mathbb{Z}$, then in particular $\mathbb{Z}/{4}\mathbb{Z} \oplus \mathbb{Z}/4\mathbb{Z} \subseteq E(L)$ and so by the Weil pairing $L=K(\sqrt{-1}).$ However, the modular curve $X_1(4,8)$ is isomorphic (over $\Q(\sqrt{-1})$) to the elliptic curve with the Cremona label 32a2 \cite[Lemma 13]{Najder} and 
\begin{align}\label{4x8}
	\begin{cases}
 \text{if}\ \ D\neq -2, \  \text{then}\ X_1(4,8)(L) \simeq C_2 \oplus C_4\ \\
\text{if}\ \ D=-2, \ \text{then} \  X_1(4,8)(L)  \simeq C_4 \oplus C_4
\end{cases}
 \end{align}
where all the points are cusps. Therefore, we get a contradiction. It remains to show that $E(L)_{\text{tor}} \not\simeq \mathbb{Z}/2 \mathbb{Z} \oplus \mathbb{Z}/{16}\mathbb{Z}.$ By way of contradiction, let $E(L)_{\text{tor}} \simeq \mathbb{Z}/2\mathbb{Z} \oplus\mathbb{Z}/{16}\mathbb{Z}.$ Fix a point $Q$ of order $2$ and a point $P$ of order $16$ such that $E(L)_{\text{tor}}= \langle Q,P \rangle.$ Then every point $aP+bQ$ with odd $a$ has order 16, and hence lies outside $E(K)_{\text{tor}}.$ So the 16 points $aP+bQ$ with even $a$ must all lie in $E(K)_{\text{tor}}$ and hence fixed by $\sigma.$ In particular, this holds for $2P.$ From $\sigma(P)=aP+bQ$ with odd $a,$ we get 
$$ 2P=\sigma(2P)=2aP+2bQ. $$
So $a=1$ or $9.$  If $b=0,$ then $E$ has a $K$-rational cyclic $16$-isogeny generated by $P.$ Together with the independent $K$-rational $2$-torsion point $Q,$ we can apply \cite[Lemma 7]{Naj16} from which one obtains an elliptic curve over $K$ with a cyclic $32$-isogeny. However, Magma computes that the modular curve $X_0(32)$ has no non-cuspidal $K$-rational points for $D\neq -7.$ Hence, we get a contradiction. If $b=1,$ then $\sigma(P)=P+Q$ or $9P+Q.$ If we consider the elliptic curve $\overline{E}:=E/\langle Q \rangle $ resp. $E/\langle 8P+Q\rangle$, with the isogeny $\phi: E \rightarrow \overline{E},$ then $\sigma(\overline{P})=\overline{P}$ where $\overline{P}$ is the image of $P$ under $\phi.$ Hence, $\overline{E}$ has a $K$-rational $16$-torsion point $\overline{P}$, a contradiction by Theorem \ref{mainlist}.
\vspace{.2cm}

$\textit{(4)}$\	Suppose  $E(K)_{\text{tor}} \simeq \mathbb{Z}/2\mathbb{Z} \oplus \mathbb{Z}/6\mathbb{Z}.$ By Theorem \ref{mainthm1} and Proposition \ref{mainprop1}, we know
\begin{itemize}	
\item	if $D=-2$ then the torsion groups which can appear as $E(L)_{\text{tor}}$ are 
	\begin{align}\label{11}
	\mathbb{Z}/{2}\mathbb{Z} \oplus \mathbb{Z}/6\mathbb{Z},\ \mathbb{Z}/2\mathbb{Z} \oplus \mathbb{Z}/{12}\mathbb{Z},\ \mathbb{Z}/4\mathbb{Z} \oplus \mathbb{Z}/{12}\mathbb{Z}.
	\end{align}
	
\item	if $D=-7,-11$, $E(L)_{\text{tor}}$ is isomorphic to either one of the groups in \eqref{11} or 
	\begin{align}\label{12}
	\mathbb{Z}/2\mathbb{Z} \oplus  \mathbb{Z}/{18}\mathbb{Z},\
	&\mathbb{Z}/2\mathbb{Z} \oplus \mathbb{Z}/{36}\mathbb{Z},\
	\mathbb{Z}/4\mathbb{Z} \oplus \mathbb{Z}/{36}\mathbb{Z},\ \mathbb{Z}/6\mathbb{Z} \oplus \mathbb{Z}/6\mathbb{Z},\\
	\nonumber 
	&\mathbb{Z}/6\mathbb{Z} \oplus \mathbb{Z}/{12}\mathbb{Z},\ \mathbb{Z}/12\mathbb{Z} \oplus \mathbb{Z}/12\mathbb{Z}.
	\end{align}
	
\item	 if $D=-19,-43,-67,-163,$ then  $E(L)_{\text{tor}}$ is isomorphic to either one of the groups in \eqref{11},\eqref{12} or  $\mathbb{Z}/2\mathbb{Z} \oplus \mathbb{Z}/{24}\mathbb{Z},\ \mathbb{Z}/4\mathbb{Z} \oplus \mathbb{Z}/{24}\mathbb{Z}.$
\end{itemize}
	 
	 The groups $\mathbb{Z}/3\mathbb{Z} \oplus \mathbb{Z}/{12}\mathbb{Z}, \mathbb{Z}/4\mathbb{Z} \oplus \mathbb{Z}/{12}\mathbb{Z}$ are ruled out as subgroups by \cite[Theorem 8]{BN16} and so are $\mathbb{Z}/6\mathbb{Z} \oplus \mathbb{Z}/{12}\mathbb{Z},$ $\mathbb{Z}/4\mathbb{Z} \oplus \mathbb{Z}/{24}\mathbb{Z},\mathbb{Z}/4\mathbb{Z} \oplus \mathbb{Z}/{36}\mathbb{Z}$ and $\mathbb{Z}/12\mathbb{Z} \oplus \mathbb{Z}/12\mathbb{Z}.$ Moreover, Table \ref{tab2} shows an example where $E(L)_{\text{tor}} \simeq \mathbb{Z}/2\mathbb{Z} \oplus \mathbb{Z}/{12}\mathbb{Z}$ which establishes part $(i)$ of the assertion. If $E(L)[9]=\mathbb{Z}/9\mathbb{Z},$ then by Lemma \ref{lem2} we have
	$$\mathbb{Z}/9\mathbb{Z} \simeq E(K)[9] \oplus E^{d}(K)[9] \simeq \mathbb{Z}/3\mathbb{Z} \oplus E^{d}(K)[9] $$
	where $L=K(\sqrt{d})$ for some $d \in K.$ But the left hand side of the isomorphism is a cyclic group whereas the right hand side is not, a contradiction. As a result, we narrow down the list \eqref{12} by eliminating the groups $\mathbb{Z}/2\mathbb{Z} \oplus \mathbb{Z}/{18}\mathbb{Z}$ and $ \mathbb{Z}/2\mathbb{Z} \oplus \mathbb{Z}/{36}\mathbb{Z}.$ Notice that $\mathbb{Z}/6\mathbb{Z} \oplus \mathbb{Z}/6\mathbb{Z}$ is realized by Table \ref{tab4}.

	It remains to prove that $E(L)_{\text{tor}} \not\simeq \mathbb{Z}/2\mathbb{Z} \oplus \mathbb{Z}/{24}\mathbb{Z}$ for any $K$ in $\mathcal{E} \subseteq \mathcal{S}$ where $\mathcal{E}=\{\mathbb{Q}(\sqrt{D}) : D=-19,-43,-67,-163\} .$  Given $K$ in $\mathcal{E}$, let $F$ be as in \eqref{ffff}, the maximal elementary abelian $2$-extension of $K.$ For this purpose, we will show that $E(F)$ does not have a point of order $8.$ Let $E$ be given by $y^2=x(x+\alpha)(x+\beta)$ with $\alpha,\beta \in \mathcal{O}_K.$ If $E(F)$ has a point of order 8, then without loss of generality, there exists $s,t,z \in \mathcal{O}_F$ such that
	$\alpha = s^4,\   \beta=t^4$  with   $s^2+t^2=z^2.$
	It follows that either $\sqrt{\alpha}$ or $-\sqrt{\alpha} $ is a square in $F.$ Applying Lemma \ref{lem7}, we have either $\alpha$ or $-\alpha$ is a square in $K.$ As the same discussion applies to $\beta$, there are four possibilities:
	\begin{align*}
		&(i)\ \alpha=s_0^2 \ \text{and}\ \beta=t_0^2, \ \ \ \
		(iii)\ \alpha=s_0^2 \ \text{and} \ \beta=-t_0^2,\\ 
		&(ii)\ \alpha=-s_0^2\  \text{and}\ \beta=t_0^2, \  
		(iv)\ \alpha=-s_0^2 \ \text{and} \ \beta=-t_0^2
	\end{align*}
	for some non-zero $s_0,t_0 \in K.$
	\vspace{.1cm}
	
	Case $(i):$ Since $\mathcal{O}_K$ is integrally closed, if $a \in \mathcal{O}_K$ is a square in $K$ then it is a square in $\mathcal{O}_K.$ Thus, $\alpha$ and $\beta$ are both squares in $\mathcal{O}_K.$ It follows that $E(K)$ has a point of order 4 by Theorem \ref{newman}, which is a contradiction by assumption. 
	\vspace{.1cm}
	
	Case $(ii):$ As $E(K)$ has a point of order $3$, there are non-zero $a,b,c \in \mathcal{O}_K$ such that $\frac{a}{b} \notin \{-2,-1,-\frac{1}{2},0,1\}$ and
	$$ \alpha =a^3(a+2b)c^2 \ \ \ \  \beta=b^3(b+2a)c^2$$
	Together with the assumption, there exists a solution to the following system of equations
	$$ -s_0^2=a^3(a+2b)c^2 $$
	$$ t_0^2=b^3(b+2a)c^2 $$
	Since $b,c,b+2a \neq 0,$ we have
	$$
	\left(\frac{s_0}{t_0}\right)^2 = -\left(\frac{a}{b}\right)^3 \frac{a+2b}{b+2a}.                     
	$$
	Letting $y=\frac{s_0}{t_0}$ and $x=\frac{a}{b}$ yields that
	
	\begin{align}\label{tenten}
	y^2 = -x^3\frac{x+2}{2x+1} 
	\end{align}
	since $x \neq \frac{1}{2}.$ Let $C$ be the curve defined by  \eqref{tenten}  and $\overline{C}$ its projective closure. By Magma, there exists a birational map $ \phi:  \overline{C}  \rightarrow E'$  (over $\mathbb{Q})$ such that $E'$ is an elliptic curve defined by 
	$ y^2=x^3-5x^2+4x$
	and 
           $$ \phi([x, y, z])  =  [2x^3 + 4x^2z + 4y^2z, -2xyz - 4yz^2, y^2z].$$
Magma computes
	$E'(K) \simeq \mathbb{Z}/2\mathbb{Z} \oplus \mathbb{Z}/2\mathbb{Z}$ 
	for each $K$ from $\mathcal{E}.$ Moreover, neither the points of $\phi^{-1}(E'(K))$ nor the set of non-regular points of $\phi$ give rise to a point $(x,y)$ satisfying the hypothesis where $x \notin \{-2,-1,-\frac{1}{2},0,1\}$ and $ y\neq 0.$ Therefore, we get a contradiction.
	\vspace{.1cm}
	
	Case $(iii):$ It follows similarly as in the previous case.
	\vspace{.1cm}
	
	Case $(iv):$ In this case we have
	\begin{align}\label{irmak}
		s^4=\alpha=-s_0^2 \ \ \ \  t^4=\beta=-t_0^2,
	\end{align}
	and so $s^2=\pm is_0.$ Note that $i \in F.$  The curve defined by $s^2+t^2=z^2$ is parametrized by $(1-m^2,2m,1+m^2)$ over $F.$  By replacing $t$ with $2m$ in \eqref{irmak}  and $s$ with $1-m^2$ in $s^2=\pm is_0$ we have $4m^4=-\frac{t_0^2}{4} \in K$ and 
	$$ 4m^4 = (-is_0 + m^4+1)^2 \ \text{or} \ 4m^4=(is_0+m^4+1)^2 $$
	We can re-write the above as
	$$
	-\frac{t_0^2}{4} = \left(\pm is_0-\frac{t_0^2}{16} + 1 \right)^2.
	$$
	
	One can see that the right side is in $K$ if $t_0=\pm 4.$ But then $s_0=\pm 2,$ which leads to the elliptic curve $E'$ defined above. However, $E'(K)_{\text{tor}}$ is not isomorphic to  $\mathbb{Z}/2\mathbb{Z} \oplus \mathbb{Z}/6\mathbb{Z}$ which gives a contradiction by assumption.
\vspace{.2cm}

$\textit{(5)}$\	Suppose $E(K)_{\text{tor}} \simeq \mathbb{Z}/2\mathbb{Z} \oplus \mathbb{Z}/4\mathbb{Z}$ and $E[4] \subseteq E(L).$ In particular $L=K(i)$ by the Weil pairing. Note that $\mathbb{Z}/4\mathbb{Z} \oplus \mathbb{Z}/4\mathbb{Z}$ is realized for each $K$ in $\mathcal{S}$ by Table \ref{tab5}. It follows from Theorem \ref{mainthm1} and Proposition \ref{mainprop1} using \cite[Theorem 8]{BN16} that the remaining possibility for $E(L)_{\text{tor}}$ is $\mathbb{Z}/4\mathbb{Z} \oplus \mathbb{Z}/8\mathbb{Z}.$ If $D=-7,$ then it is realized by Table \ref{table1}. If $D\neq -7$ and $E(L)_{\text{tor}} \simeq \mathbb{Z}/4\mathbb{Z} \oplus \mathbb{Z}/8\mathbb{Z},$ then the original curve $E$ would be induced by a non-cuspidal $L$-rational point of the modular curve $X_1(4,8)$, a contradiction by \eqref{4x8}.

    Suppose $E[4] \not\subseteq E(L).$ By Theorem \ref{mainthm1} and Proposition \ref{mainprop1} using \cite[Theorem 8]{BN16}, we have 
    \begin{itemize}
   \item if $D = -2,-11,$ then the possible groups for $E(L)_{\text{tor}}$ are 
   \begin{align}\label{not4x4}
   \mathbb{Z}/2\mathbb{Z} \oplus \mathbb{Z}/4\mathbb{Z},\ \mathbb{Z}/2\mathbb{Z} \oplus \mathbb{Z}/8\mathbb{Z}, \  \mathbb{Z}/{2}\mathbb{Z} \oplus \mathbb{Z}/{16}\mathbb{Z}.
   \end{align}
\item if $D=-7,$ then $E(L)_{\text{tor}}$ is isomorphic to either one of the groups in \eqref{not4x4} or
 $$\mathbb{Z}/2\mathbb{Z} \oplus \mathbb{Z}/32\mathbb{Z}.$$
 \item if $D=-19,-43,-67,-163,$ then $E(L)_{\text{tor}}$ is isomorphic to either one of the groups in \eqref{not4x4} or $$\mathbb{Z}/2\mathbb{Z} \oplus \mathbb{Z}/{12}\mathbb{Z},\ \mathbb{Z}/2\mathbb{Z} \oplus \mathbb{Z}/{24}\mathbb{Z}.$$
 \end{itemize}

   If $E(L)[16] \simeq \mathbb{Z}/2\mathbb{Z} \oplus \mathbb{Z}/{16}\mathbb{Z},$ then $E(L)[16] =\langle Q, P \rangle $ where $Q$ is a 2-torsion point and $P$ is a $16$-torsion point, So, $E(K)_{\text{tor}} \simeq \langle Q, 4P \rangle.$ Then we have $\sigma(P)=aP$ or $aP+Q$ where $a$ is odd. Applying $\sigma$ twice implies $ P=a^2P $ and so  $a^2\equiv 1$ mod ($16$). Then $a$ must be $1,7,9$ or $15.$ For all eight possibilities of $\sigma(P),$ we have either $\sigma(2P)=2P$ or $\sigma(2P)=-2P.$ The former is not possible by assumption. The later implies that $\sigma(4P)=-4P,$ which yields a contradiction as $4P$ is $K$-rational. This rules out the groups $\mathbb{Z}/{2}\mathbb{Z} \oplus \mathbb{Z}/{16}\mathbb{Z}$ and $\mathbb{Z}/{2}\mathbb{Z} \oplus \mathbb{Z}/{32}\mathbb{Z}.$ One can see Table \ref{tab3} for the occurrence of $\mathbb{Z}/2\mathbb{Z} \oplus \mathbb{Z}/12\mathbb{Z}.$ Lastly, replacing $E$ by $E^{d}$ if necessary, the group $\mathbb{Z}/2\mathbb{Z} \oplus \mathbb{Z}/{24}\mathbb{Z}$ can be excluded as in the proof of $\textit{(4)}$ above. 
\vspace{.2cm}

$\textit{(6)}$\ Suppose $E(K)_{\text{tor}} \simeq \mathbb{Z}/2\mathbb{Z} \oplus \mathbb{Z}/2\mathbb{Z}.$ As $E(K(\sqrt{d})) \simeq E^d(K(\sqrt{d})),$ it suffices to study the non-trivial  growth of $E^d(K)_{\text{tor}}$ over $K(\sqrt{d})$. If $E^d(K)_{\text{tor}} \not\simeq \mathbb{Z}/2\mathbb{Z} \oplus \mathbb{Z}/2\mathbb{Z},$ then we apply a previously proven case (see Table \ref{tab2}, \ref{table1} for examples).
If $E^d(K)_{\text{tor}} \simeq \mathbb{Z}/2\mathbb{Z} \oplus \mathbb{Z}/2\mathbb{Z},$ then we have from Proposition \ref{mainprop1} that $E(L)_{\text{tor}}$ is isomorphic to $\mathbb{Z}/2\mathbb{Z} \oplus \mathbb{Z}/2\mathbb{Z},$ $\mathbb{Z}/2\mathbb{Z} \oplus \mathbb{Z}/{4}\mathbb{Z},$  or $\mathbb{Z}/4\mathbb{Z} \oplus \mathbb{Z}/4\mathbb{Z}$ (see Table \ref{tab88}).

\bibliographystyle{plain}
\bibliography{bibfile}

\end{document}